\newcommand {\bmo}{{\mathrm{bmo}}}
\newcommand {\BMO}{{\mathrm{BMO}}}
\newcommand {\C}{{\mathbb C}}
\newcommand {\ud}{\mathrm{d}}
\newcommand {\veps}{\varepsilon}
\newcommand {\Ell}{L}
\newcommand {\F}{{\mathcal{F}}}
\newcommand {\Hrinf}{\mathcal{H}^{r,\infty}}
\newcommand {\HT}{\mathcal{H}}
\newcommand {\Hp}{\mathcal{H}^{p}_{FIO}(\Rn)}
\newcommand {\Hps}{\mathcal{H}^{s,p}_{FIO}(\Rn)}
\newcommand {\Hpt}{\mathcal{H}^{t,p}_{FIO}(\Rn)}
\newcommand {\rb}{\rangle}
\newcommand {\lb}{{\langle}}
\newcommand {\La}{{\mathcal{L}}}
\newcommand {\N}{{{\mathbb N}}}
\newcommand {\ph}{{\varphi}}
\newcommand {\R}{{\mathbb R}}
\newcommand {\Rn}{{\mathbb{R}^{n}}}
\newcommand {\supp}{{\mathrm{supp}}}
\newcommand {\Sw}{\mathcal{S}}
\newcommand {\w}{{\omega}}
\newcommand {\Z}{{{\mathbb Z}}}
\newcommand {\vanish}[1]{\relax}
\newcommand{\wh}{\widehat}
\newcommand{\wt}{\widetilde}
\DeclareMathOperator{\Real}{Re}
\DeclareMathOperator{\Imag}{Im}
\DeclareFontFamily{U}{mathx}{\hyphenchar\font45}
\DeclareFontShape{U}{mathx}{m}{n}{
      <5> <6> <7> <8> <9> <10>
      <10.95> <12> <14.4> <17.28> <20.74> <24.88>
      mathx10
      }{}
\DeclareSymbolFont{mathx}{U}{mathx}{m}{n}
\DeclareMathAccent{\widecheck}{0}{mathx}{"71}
\newtheorem{theorem}{Theorem}[section]
\newtheorem{lemma}[theorem]{Lemma}
\newtheorem{proposition}[theorem]{Proposition}
\newtheorem{corollary}[theorem]{Corollary}
\theoremstyle{definition}
\newtheorem{definition}[theorem]{Definition}
\newtheorem{remark}[theorem]{Remark}
\numberwithin{equation}{section}
\protected\def\ignorethis#1\endignorethis{}
\let\endignorethis\relax
\title[Rough pseudodifferential operators on Hardy spaces for FIOs II]{Rough pseudodifferential operators on Hardy spaces for Fourier integral operators II}
\author{Jan Rozendaal}
\address{Institute of Mathematics, Polish Academy of Sciences\\
ul.~\'{S}niadeckich 8\\
00-656 Warsaw\\
Poland}
\email{jrozendaal@impan.pl}
\keywords{Rough pseudodifferential operators, Hardy spaces, Fourier integral operators, paradifferential operators}
\subjclass[2020]{Primary 35S05. Secondary 42B35, 35S30, 35S50}
\thanks{This research was supported by NCN grant UMO2017/27/B/ST1/00078. The research leading to these results has received funding from the Norwegian Financial Mechanism 2014-2021, grant 2020/37/K/ST1/02765.}
\begin{document}

\begin{abstract}
We obtain improved bounds for pseudodifferential operators with rough symbols on Hardy spaces for Fourier integral operators. The symbols $a(x,\eta)$ are elements of $C^{r}_{*}S^{m}_{1,\delta}$ classes that have limited regularity in the $x$ variable. We show that the associated pseudodifferential operator $a(x,D)$ maps between Sobolev spaces $\Hps$ and $\Hpt$ over the Hardy space for Fourier integral operators $\Hp$. Our main result is that for all $r>0$, $m=0$ and $\delta=1/2$, there exists an interval of $p$ around $2$ such that $a(x,D)$ acts boundedly on $\Hp$.
\end{abstract}

\maketitle

\section{Introduction}\label{sec:intro}

In this article, we further develop a paradifferential calculus adapted to the $L^{p}$ theory of wave equations, by obtaining improved bounds for rough pseudodifferential operators acting on Hardy spaces for Fourier integral operators (FIOs). These spaces were recently used to extend the optimal fixed-time $L^{p}$ regularity for wave equations, from equations with smooth coefficients to equations with rough coefficients. A key ingredient in the proof of these new regularity results involves bounds for rough pseudodifferential operators on the Hardy spaces for Fourier integral operators. The improved bounds for rough pseudodifferential operators in this article in turn lead to improvements in the $L^{p}$ regularity theory of wave equations with rough coefficients. Moreover, as in the case of paradifferential operators acting on classical function spaces, such bounds also imply algebraic properties of the Hardy spaces for Fourier integral operators.

\subsection{Setting}

The Hardy space $\HT^{1}_{FIO}(\Rn)$ for FIOs was introduced by Smith in \cite{Smith98a}. It is an invariant space for suitable FIOs, and it satisfies Sobolev embeddings that allow one to recover the optimal $L^{p}(\Rn)$ regularity of FIOs, obtained by Seeger, Sogge and Stein in \cite{SeSoSt91}. More precisely, an FIO $T$ of order zero, associated with a local canonical graph and having a compactly supported Schwartz kernel, satisfies $T:W^{s+s(p),p}(\Rn)\to W^{s-s(p),p}(\Rn)$ for all $1<p<\infty$ and $s\in\R$. Here and throughout, for $1\leq p\leq\infty$ we write 
\begin{equation}\label{eq:sp}
s(p):=\frac{n-1}{2}\Big|\frac{1}{2}-\frac{1}{p}\Big|.
\end{equation}
The construction of Smith was extended by Hassell, Portal and the author \cite{HaPoRo20} to a full range of invariant spaces for FIOs, denoted by $\Hp$ for $1\leq p\leq \infty$, satisfying the Sobolev embeddings
\begin{equation}\label{eq:Sobolev1}
W^{s(p),p}(\Rn)\subseteq\Hp\subseteq W^{-s(p),p}(\Rn)
\end{equation}
for $1<p<\infty$. By combining \eqref{eq:Sobolev1} with the invariance of $\Hp$ under FIOs, one indeed recovers the optimal $L^{p}(\Rn)$ regularity of these operators. However, the invariance of $\Hp$ under FIOs also allows for iterative constructions involving FIOs which are not possible when working directly on $L^{p}(\Rn)$, due to the loss of regularity which occurs in every iteration step. In turn, such iterative constructions are powerful tools for the study of wave equations with rough coefficients, due to another innovation by Smith which will be explained next.

In \cite{Smith98b}, Smith used techniques from paradifferential calculus, as introduced by Bony~\cite{Bony81} (see also Meyer \cite{Meyer81a,Meyer81b}), to construct a parametrix for wave equations with $C^{1,1}$ coefficients. More precisely, paradifferential calculus yields a decomposition of a differential operator $A$ with rough coefficients into a sum $A=A_{1}+A_{2}$ of pseudodifferential operators, where $A_{1}$ has smooth coefficients, and $A_{2}$ has rough coefficients but a lower differential order than $A$. Moreover, one can construct a parametrix for the smooth pseudodifferential equation $(\partial_{t}^{2}-A_{1})u(t)=0$, in terms of wave packet transforms and bicharacteristic flows. Given suitable mapping properties of $A_{2}$ on $L^{2}(\Rn)$, one can then use Duhamel's principle and an iterative correction procedure to obtain from this a parametrix for the full equation $(\partial_{t}^{2}-A)u(t)=0$ on $L^{2}(\Rn)$. The resulting parametrix was subsequently used by Smith, and by Tataru, to obtain powerful results for rough wave equations, such as Strichartz estimates \cite{Smith98b,Tataru00,Tataru01,Tataru02}, propagation of singularities \cite{Smith14}, the related spectral cluster estimates~\cite{Smith06}, and well-posedness of nonlinear wave equations \cite{Smith-Tataru05}.

Recently, Hassell and the author used the same procedure to extend the optimal fixed-time $L^{p}(\Rn)$ regularity for smooth wave equations to rough equations \cite{Hassell-Rozendaal20}. However, due to the loss of regularity that occurs when an FIO acts on $L^{p}(\Rn)$, the iterative correction procedure now has to take place on the Hardy spaces for FIOs. And, given the role of the operator $A_{2}$ in the construction above, this in turn means that one requires bounds for rough pseudodifferential operators on $\Hp$.

Whereas mapping properties of rough pseudodifferential operators on $L^{p}(\Rn)$ are classical \cite{Marschall88,Taylor91,Taylor00}, the first such properties on $\Hp$ were obtained by the author in \cite{Rozendaal20}. More precisely, it was shown there that, for a pseudodifferential operator $a(x,D)$ with symbol $a$ in the rough symbol class $C^{r}_{*}S^{0}_{1,1/2}$ (see Definition \ref{def:rough}), one has
\begin{equation}\label{eq:previous}
a(x,D):\Hp\to \Hp\quad\text{for all }1<p<\infty\text{ if }r>n-1.
\end{equation}
Here the Zygmund space $C^{r}_{*}(\Rn)$ from \eqref{eq:defZyg} measures the spatial regularity of the rough symbol, which apart from this roughness behaves like an $S^{0}_{1,1/2}$ symbol. We note that $C^{r}_{*}(\Rn)$ coincides with $C^{r}(\Rn)$ for $r\notin\N$ but strictly contains $C^{r-1,1}(\Rn)$ for $r\in\N$. 

In this article we will improve upon \eqref{eq:previous}.

\subsection{Main results}

The Sobolev space $\Hps\!=\!\lb D\rb^{-s}\Hp$ over $\Hp$ is introduced in Definition \ref{def:HpFIO} (see also \eqref{eq:Hpintro}). Here $\lb D\rb=(1+\sqrt{-\Delta})^{1/2}$. Recall the definition of $s(p)$ from \eqref{eq:sp}. A simplified version of our main result is as follows.

\begin{theorem}\label{thm:intro}
Let $r>0$, $p\in(1,\infty)$ and $a\in C^{r}_{*}S^{0}_{1,1/2}$. If $4s(p)<r$, then
\begin{equation}\label{eq:intromap}
a(x,D):\Hps\to \Hps
\end{equation}
continuously, for all $-r/2+s(p)<s<r-s(p)$.
\end{theorem}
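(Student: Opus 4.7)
The plan is to perform a Bony-type paradifferential decomposition of the symbol $a$ at the $(1,1/2)$ scaling, treating the smooth part by the pseudodifferential calculus on $\Hp$ and the rough remainder via the Sobolev embeddings \eqref{eq:Sobolev1}.

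First I would apply a Littlewood--Paley decomposition of $a(\cdot,\eta)$ in the $x$ variable and split
\[ a = a^{\#}+a^{\flat} \]
at the threshold $|\xi|\sim\lb\eta\rb^{1/2}$. Standard paradifferential arguments then yield $a^{\#}\in S^{0}_{1,1/2}$ (smooth) and $a^{\flat}\in C^{r}_{*}S^{-r/2}_{1,1/2}$, where the $r/2$ order gain for the rough remainder arises from trading the $C^{r}_{*}$ Zygmund regularity of $a$ in $x$ against the $\lb\eta\rb^{1/2}$ scaling of the cutoff.

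For $a^{\#}(x,D)$, I would invoke the boundedness of smooth $S^{0}_{1,1/2}$ pseudodifferential operators on $\Hp$ established in \cite{HaPoRo20}; combined with $\Hps=\lb D\rb^{-s}\Hp$ this gives $a^{\#}(x,D):\Hps\to\Hps$ for every $s\in\R$. This is where the structure of $\Hp$ is essential: the half-derivative loss that $S^{0}_{1,1/2}$ operators typically suffer on $\Ellp$ disappears on $\Hp$, which is precisely one of the motivations behind the definition of $\Hp$.

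For $a^{\flat}(x,D)$, I would reduce to a Sobolev estimate. The embeddings \eqref{eq:Sobolev1} give
\[ W^{s+s(p),p}(\Rn)\hookrightarrow\Hps\hookrightarrow W^{s-s(p),p}(\Rn), \]
so it suffices to prove $a^{\flat}(x,D):W^{s-s(p),p}(\Rn)\to W^{s+s(p),p}(\Rn)$, which is a gain of $2s(p)$ Sobolev derivatives. Since $a^{\flat}$ has order $-r/2$ and $C^{r}_{*}$ regularity in $x$, the classical Marschall--Taylor bounds for rough pseudodifferential operators provide this mapping provided $r/2\geq 2s(p)$, i.e.\ under the standing hypothesis $4s(p)<r$. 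The admissible Sobolev indices for $a^{\flat}$, dictated by $r$ and the $(1,1/2)$ scaling, translate after the sandwich into the stated range $-r/2+s(p)<s<r-s(p)$.

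The main obstacle I anticipate is the sharpness of this range; a naive application of the Sobolev sandwich tends to produce a strictly smaller range, so recovering the full interval will likely require complex interpolation on the scale $\Hps$ with the endpoint $p=2$, where $\mathcal{H}^{2}_{FIO}(\Rn)=\Elltwo(\Rn)$ and $s(p)=0$ and the statement reduces directly to Marschall's theorem on $H^{s,2}(\Rn)$. A secondary difficulty is verifying that the paradifferential smoothing genuinely produces a remainder in $C^{r}_{*}S^{-r/2}_{1,1/2}$ with quantitatively controlled seminorms when $r$ is non-integer; this is standard but requires care with the Zygmund (rather than H\"older) norm.
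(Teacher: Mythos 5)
Your decomposition has a gap at its central step. For a symbol $a\in C^{r}_{*}S^{0}_{1,1/2}$, i.e.\ with $\delta=1/2$ already, the symbol smoothing at the threshold $|\xi|\sim\lb\eta\rb^{1/2}$ gives $a^{\flat}_{1/2}\in C^{r}_{*}S^{m-(\beta-\delta)r}_{1,\beta}$ with $\beta=\delta=1/2$, hence \emph{no} order gain: $a^{\flat}_{1/2}\in C^{r}_{*}S^{0}_{1,1/2}$, not $C^{r}_{*}S^{-r/2}_{1,1/2}$. The $-r/2$ gain you describe occurs only when the original symbol has $\delta=0$ (e.g.\ a multiplication operator), which is why the paper's Theorem \ref{thm:main} records it only for symbols of the form $b^{\flat}_{1/2}$ with $b\in\Hrinf(\Rn)$. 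To manufacture an order gain one must smooth at some $\beta>1/2$, but then the smooth part lands in $S^{0}_{1,\beta}$, which is no longer covered by the $\Hp$-boundedness of $S^{0}_{1,1/2}$ operators (Lemma \ref{lem:smoothpseudo}). Since your $a^{\flat}$ has order $0$ and the Marschall--Bourdaud bounds are order-preserving, the Sobolev sandwich would require a gain of $2s(p)$ derivatives that is simply not there, and the argument collapses.

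The paper resolves this with a three-term decomposition $a=a^{\sharp}_{1/2}+(a^{\flat}_{1/2})^{\sharp}_{\beta}+(a^{\flat}_{1/2})^{\flat}_{\beta}$ with $\beta=\tfrac12+\tfrac{2s(p)}{r}$: the outer two terms are handled exactly as you propose (smooth calculus on $\Hp$, resp.\ Sobolev sandwich, the latter now legitimately applicable because $(a^{\flat}_{1/2})^{\flat}_{\beta}$ has order $-(\beta-\tfrac12)r=-2s(p)$), and it is the middle, frequency-localized piece $(a^{\flat}_{1/2})^{\sharp}_{\beta}$ that carries all the difficulty. That piece is treated by Stein interpolation of the analytic family $a_{z}(x,\eta)=e^{(\kappa z+\lambda)^{2}}\lb\eta\rb^{-(\kappa z+\lambda)/2}\lb D\rb^{\kappa z+\lambda}a(\cdot,\eta)$, i.e.\ by analytically varying the \emph{spatial regularity} of the symbol so that the boundary line $\Real z=0$ has $C^{r_{0}}_{*}$ regularity with $r_{0}$ large enough for the earlier result \eqref{eq:previous} (Proposition \ref{prop:critical}) on $\HT^{1+\delta}_{FIO}(\Rn)$, while $\Real z=1$ retains just enough regularity for $L^{2}$ boundedness. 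Your closing remark anticipates interpolation with the $p=2$ endpoint, but only as a device to sharpen the Sobolev range; in fact it is the essential mechanism, and it is interpolation of the symbol's regularity (not of the fixed operator $a(x,D)$ across $p$) that produces the threshold $4s(p)<r$. Without this ingredient the proposal does not prove the theorem.
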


Theorem \ref{thm:intro} follows from Theorem \ref{thm:main}, which also deals with the case where $4s(p)\geq r$. Moreover, \eqref{eq:intromap} also holds for $s=r-s(p)$ if $a$ has additional $\BMO$-type regularity. Under even more refined regularity assumptions on $a$, which are satisfied in the applications to rough wave equations in \cite{Hassell-Rozendaal20}, one may also let $s=-r/2+s(p)$.

Note that, since $4s(p)=2(n-1)|\frac{1}{2}-\frac{1}{p}|<n-1$ for any $1<p<\infty$, Theorem \ref{thm:intro} indeed improves upon \eqref{eq:previous}. Note also that, since the Zygmund space $C^{r}_{*}(\Rn)$ contains $C^{r}(\Rn)$, Theorem \ref{thm:intro} applies in particular to pseudodifferential operators whose symbols have $C^{r}$ regularity (or $C^{r-1,1}$ regularity if $r\in\N$). 

By combining Theorem \ref{thm:intro} with the parametrix approach described above, one obtains improved results on the fixed-time $L^{p}(\Rn)$ regularity of wave equations with rough coefficients. More precisely, in a first version of \cite{Hassell-Rozendaal20}, which relied on \cite{Rozendaal20}, the optimal $L^{p}(\Rn)$ regularity for wave equations with smooth coefficients was extended to equations with $C^{1,1}(\Rn)$ coefficients for $n\leq 2$. On the other hand, at least $C^{2+\veps}(\Rn)$ regularity of the coefficients was required for $n\geq3$. This restriction arose from the mapping property in \eqref{eq:previous}, and not from other parts of the parametrix construction (which require $C^{1,1}(\Rn)$ regularity of the coefficients). By contrast, Theorem \ref{thm:main} yields the optimal $L^{p}(\Rn)$ regularity for wave equations with $C^{1,1}(\Rn)$ coefficients in all dimensions, as long as $2s(p)<1$. These improved results are contained in a revised version of \cite{Hassell-Rozendaal20}. 

However, the results in this article are also of independent interest. For example, they yield algebraic properties of the Hardy spaces for FIOs. To understand why the algebraic structure of the Hardy spaces for FIOs is nontrivial, note that the $\Hp$ norm is given by (see Definition \ref{def:HpFIO})
\begin{equation}\label{eq:Hpintro}
\|f\|_{\Hp}=\|q(D)f\|_{L^{p}(\Rn)}+\Big(\int_{S^{n-1}}\|\ph_{\w}(D)f\|_{L^{p}(\Rn)}^{p}\ud\w\Big)^{1/p}
\end{equation}
for $1<p<\infty$ and $f\in\Hp$. Here $q\in C^{\infty}_{c}(\Rn)$, and the Fourier multiplier $\ph_{\w}(D)$ localizes in frequency to a paraboloid in the direction of $\w\in S^{n-1}$. Since Fourier multipliers do not commute with multiplication operators, it is unclear which multiplication operators preserve $\Hp$. Moreover, the sharpness of the Sobolev embeddings in \eqref{eq:Sobolev1} suggests that some smoothness of the multiplication operator might be necessary. By combining Theorem \ref{thm:main} with a paradifferential smoothing procedure, it is shown in Corollary \ref{cor:alldelta} that multiplication with an element of $C^{r}_{*}(\Rn)$ preserves $\Hps$, as long as $2s(p)<r$ and $-r/2+s(p)<s<r-s(p)$ (see Remark \ref{rem:mult}).

Another motivation for this article is the development of a paradifferential calculus on the Hardy spaces for FIOs. Given the powerful applications of paradifferential calculus to propagation of singularities and nonlinear partial differential equations, and because of the new tools that the Hardy spaces for FIOs provide, we expect that such a paradifferential calculus will also have other applications to the $L^{p}$ theory of wave equations.

\subsection{Overview of the proof}

To deduce Theorem \ref{thm:intro} we do not improve upon the methods that yielded \eqref{eq:previous} per se. In \cite{Rozendaal20}, \eqref{eq:previous} was proved by adapting classical paradifferential methods to the dyadic-parabolic, or second dyadic, decomposition of phase space which has been a crucial tool in the $L^{p}$ theory of FIOs since its inception \cite{Fefferman73b}. This decomposition is in turn embedded into the Hardy spaces for FIOs, through the combination of the parabolic frequency localizations $\ph_{\w}(D)$ in \eqref{eq:Hpintro}, and the dyadic frequency localizations in Littlewood--Paley theory for $L^{p}(\Rn)$. To adapt paradifferential calculus to this decomposition and prove \eqref{eq:previous}, one first uses a symbol decomposition from \cite{Coifman-Meyer78} to reduce to symbols with a simpler structure. Then one adapts the standard paraproduct paradigm, which involves grouping together the frequencies of functions into dyadic annuli and keeping track of their interaction when two functions are multiplied, to the dyadic-parabolic decomposition. Finally, an anisotropic Mikhlin multiplier theorem allows one to estimate away the parabolic $\ph_{\w}(D)$, after which dyadic Littlewood--Paley theory applies to $L^{p}(\Rn)$. However, it is not clear how to modify these methods, which apply to all $1<p<\infty$ simultaneously as long as $r>n-1$, to obtain boundedness on $\Hp$ for a fixed $p\neq 2$ if $r\leq n-1$.

Instead, we use \eqref{eq:previous} as a black box, and interpolate it with the classical result that a pseudodifferential operator with a $C^{r}_{*}S^{0}_{1,1/2}$ symbol is bounded on $L^{2}(\Rn)$ for any $r>0$. Since the Hardy spaces for FIOs form a complex interpolation scale, and because $\HT^{2}_{FIO}(\Rn)=L^{2}(\Rn)$, such an interpolation procedure is possible. However, we cannot rely on the same type of interpolation that has classically been used in Calder\'{o}n-Zygmund theory or, for that matter, in the proof of the optimal $L^{p}(\Rn)$ regularity of FIOs. In the latter case an FIO of order zero loses $(n-1)/2$ derivatives on the classical local Hardy space, and an FIO of order zero is bounded on $L^{2}(\Rn)$. Then interpolation of analytic families of operators, and duality, yields the optimal $L^{p}(\Rn)$ regularity of FIOs for all $1<p<\infty$. 

Such an approach does not work in our setting, since we want to show that a rough pseudodifferential operator $a(x,D)$ of order zero is bounded on $\Hp$ for a restricted range of $p$, and we cannot afford to lose any derivatives. Instead, we interpolate the regularity of the symbol $a\in C^{r}_{*}S^{0}_{1,1/2}$. More precisely, for $\kappa,\lambda\in\R$ with $\kappa+\lambda<r$ and $z\in\C$ with $0\leq \Real(z)\leq 1$, one can apply the operator $(1+\Delta)^{(\kappa z+\lambda)/2}$ to $a$ to construct an $a_{z}\in C^{r-\Real(\kappa z+\lambda)}_{*}S^{0}_{1,1/2}$ satisfying uniform symbol estimates in $\Imag(z)$. Moreover, for any $\delta>0$, one can choose $\kappa$ and $\lambda$ such that $a_{it}\in C^{n-1+\delta}_{*}S^{0}_{1,1/2}$ and $a_{1+it}\in C^{\delta}_{*}S^{0}_{1,1/2}$ for all $t\in\R$. Then $a_{1+it}(x,D)$ is bounded on $L^{2}(\Rn)$, and $a_{it}(x,D)$ is bounded on $\HT^{1+\delta}_{FIO}(\Rn)$, by \eqref{eq:previous}. As long as $4s(p)<r$, one can additionally find a $\theta\in[0,1]$ such that $\frac{1}{p}=\frac{1-\theta}{1+\delta}+\frac{\theta}{2}$ and $a_{\theta}=a$. Then interpolation of analytic families of operators shows that $a(x,D)$ is bounded on $\Hp$, thereby proving Theorem \ref{thm:intro} for $s=0$. 

To prove Theorem \ref{thm:intro} for all $-r/2+s(p)<s<r-s(p)$, we need to modify this approach slightly. Indeed, the range of Sobolev indices $s$ for which one can expect boundedness of $a(x,D)$ on $\Hps$ is intrinsically tied to the regularity of $a$, as is already clear for multiplication operators acting on $\HT^{s,2}_{FIO}(\Rn)=W^{s,2}(\Rn)$. When directly applying the interpolation procedure above to $a$, one only obtains boundedness of $a_{1+it}(x,D)$ on $\HT^{s,2}_{FIO}(\Rn)$ for a small interval of $s$ around $0$. Then interpolation does not yield the full range of Sobolev exponents in Theorem \ref{thm:intro}. 

Instead, we first apply the same symbol smoothing procedure as in \cite{Rozendaal20} to $a$, to remove the low and high frequencies and deal with these separately. The low frequencies are not problematic, while the high frequencies can be dealt with using the Sobolev embeddings in \eqref{eq:Sobolev1} and classical results about rough pseudodifferential operators on $W^{s,p}(\Rn)$. This leads to the restriction on $s$ in Theorem \ref{thm:intro}. The remaining ``middle" frequencies, which have to be treated differently because of the dyadic-parabolic decomposition of the Hardy spaces for FIOs, were the most problematic in \cite{Rozendaal20}, but they do not lead to any restrictions on the range of Sobolev indices $s$. To prove Theorem \ref{thm:intro} one can thus apply the interpolation procedure above to the part of the symbol of $a$ which only contains these ``middle" frequencies.

\subsection{Organization of this article}

This article is organized as follows. In Section \ref{sec:Hardy} we collect some background on the Hardy spaces for Fourier integral operators, including a version for these spaces of the standard result on interpolation of analytic families of operators. In Section \ref{sec:symbols} we introduce our rough symbol classes, as well as the paradifferential symbol smoothing procedure which is used in the proof of Theorem \ref{thm:intro}. Section \ref{sec:prelim} in turn contains the various preliminary results which are mixed together in the proof of our main theorem. These include the aforementioned proposition on interpolation of rough symbols, as well as a more refined version of \eqref{eq:previous} and a lemma about pseudodifferential operators with rough symbols on $L^{p}(\Rn)$. Finally, Section \ref{sec:main} contains the proof of our main result, Theorem \ref{thm:main}, which in turn implies Theorem \ref{thm:intro}. In Corollary \ref{cor:alldelta} we deduce from Theorem \ref{thm:main} a more general statement about pseudodifferential operators with $C^{r}_{*}S^{0}_{1,\delta}$ symbols for $0\leq \delta\leq 1/2$ which applies in particular to multiplication operators.

\subsection*{Notation}\label{subsec:notation}

The natural numbers are $\N=\{1,2,\ldots\}$, and the nonnegative integers are $\Z_{+}=\N\cup\{0\}$. Throughout this article, we fix $n\in\N$ with $n\geq2$. Our techniques also apply for $n=1$, but in that case the results are classical, by Lemma \ref{lem:pseudoLp} and because $\HT^{s,p}_{FIO}(\R)=\HT^{s,p}(\R)$ for all $p\in[1,\infty]$ and $s\in\R$ (see \eqref{eq:classical} and \eqref{eq:sobolev}). 

For $\xi\in\Rn$ we write $\lb\xi\rb=(1+|\xi|^{2})^{1/2}$, and $\hat{\xi}=\xi/|\xi|$ if $\xi\neq 0$. We use multi-index notation, where $\partial^{\alpha}_{\xi}=\partial^{\alpha_{1}}_{\xi_{1}}\ldots\partial^{\alpha_{n}}_{\xi_{n}}$, and we set $\partial_{\xi}:=(\partial_{\xi_{1}},\ldots,\partial_{\xi_{n}})$.

The spaces of Schwartz functions and tempered distributions are $\Sw(\Rn)$ and $\Sw'(\Rn)$, respectively.  The Fourier transform of an $f\in\Sw'(\Rn)$ is denoted by $\F f$ or $\widehat{f}$. For $f\in\Ell^{1}(\Rn)$ and $\xi\in\Rn$ one has
\begin{align*}
\F f(\xi)=\int_{\Rn}e^{-i x\xi}f(x)\ud x.
\end{align*}
The Fourier multiplier with symbol $\ph\in\Sw'(\Rn)$ is denoted by $\ph(D)$. The space of bounded linear operators between Banach spaces $X$ and $Y$ is $\La(X,Y)$, and $\La(X):=\La(X,X)$. 

We write $f(s)\lesssim g(s)$ to indicate that $f(s)\leq Cg(s)$ for all $s$ and a constant $C>0$ independent of $s$, and similarly for $f(s)\gtrsim g(s)$ and $g(s)\eqsim f(s)$.

\section{Hardy spaces for Fourier integral operators}\label{sec:Hardy}

In this section we collect the prerequisite background on the Hardy spaces for Fourier integral operators.

\subsection{Definitions}

For notational convenience, throughout this article we write
\begin{equation}\label{eq:classical}
\HT^{s,p}(\Rn):=
\begin{cases}
W^{s,p}(\Rn)&\text{if }p\in(1,\infty),\\
\lb D\rb^{-s}\HT^{1}(\Rn)&\text{if }p=1,\\
\lb D\rb^{-s}\bmo(\Rn)&\text{if }p=\infty,
\end{cases}
\end{equation}
for $p\in[1,\infty]$ and $s\in\R$, with the natural associated norms. Here $\HT^{1}(\Rn)$ is the local Hardy space from \cite{Goldberg79}, and $\bmo(\Rn)$ is its dual. Recall that $\bmo(\Rn)$ consists of all $f\in\Sw'(\Rn)$ such that $q(D)f\in L^{\infty}(\Rn)$ and $(1-q)(D)f\in \BMO(\Rn)$, with
\[
\|f\|_{\bmo(\Rn)}=\|q(D)f\|_{L^{\infty}(\Rn)}+\|(1-q)(D)f\|_{\BMO(\Rn)}.
\]
Here and in the rest of this article, $q\in C^{\infty}_{c}(\Rn)$ is such that $q(\xi)=1$ for $|\xi|\leq 2$.

Fix a non-negative radial $\ph\in C^{\infty}_{c}(\Rn)$ such that $\ph(\xi)=0$ for $|\xi|>1$, and $\ph\equiv1$ in a neighborhood of zero. Later on we will require the support of $\ph$ to be sufficiently small. For $\w\in S^{n-1}$, $\sigma>0$ and $\xi\in\Rn\setminus\{0\}$, we write $\ph_{\w,\sigma}(\xi):=c_{\sigma}\ph(\tfrac{\hat{\xi}-\w}{\sqrt{\sigma}})$, where $c_{\sigma}:=(\int_{S^{n-1}}\ph(\frac{e_{1}-\nu}{\sqrt{\sigma}})^{2}\ud\nu)^{-1/2}$ for $e_{1}=(1,0,\ldots,0)$ the first basis vector of $\Rn$ (this choice is immaterial). Also set $\ph_{\w,\sigma}(0):=0$. Let $\Psi\in C^{\infty}_{c}(\Rn)$ be non-negative, radial, such that $\Psi(\xi)=0$ if $|\xi|\notin[1/2,2]$, and such that $\int_{0}^{\infty}\Psi(\sigma\xi)^{2}\frac{\ud \sigma}{\sigma}=1$ for all $\xi\neq 0$. Next, for $\w\in S^{n-1}$ and $\xi\in\Rn$, set 
\[
\ph_{\w}(\xi):=\int_{0}^{4}\Psi(\sigma\xi)\ph_{\w,\sigma}(\xi)\frac{\ud\sigma}{\sigma}.
\]
Some properties of these functions are as follows (see \cite[Remark 3.3]{Rozendaal21}):
\begin{enumerate}
\item For each $\w\in S^{n-1}$, the function $\ph_{\w}$ is supported on a paraboloid in the direction of $\w$. More precisely, for $\xi\neq0$, one has 
\[
\ph_{\w}(\xi)=0\text{ if }|\xi|<\tfrac{1}{8}\text{ or }|\hat{\xi}-\w|>2|\xi|^{-1/2}.
\]
\item For all $\alpha\in\Z_{+}^{n}$ and $\beta\in\Z_{+}$ there exists a $C_{\alpha,\beta}\geq0$ such that
\[
|(\w\cdot \partial_{\xi})^{\beta}\partial^{\alpha}_{\xi}\ph_{\w}(\xi)|\leq C_{\alpha,\beta}|\xi|^{\frac{n-1}{4}-\frac{|\alpha|}{2}-\beta}
\]
for all $\w\in S^{n-1}$ and $\xi\neq0$. In particular, the inverse Fourier transforms of the functions $\xi\mapsto \lb\xi\rb^{-2n}\ph_{\w}(\xi)$ are uniformly bounded in $L^{1}(\Rn)$, and
\begin{equation}\label{eq:Lqmap2}
\sup_{\w\in S^{n-1}}\|\lb D\rb^{-2n}\ph_{\w}(D)\|_{\La(L^{p}(\Rn))}<\infty
\end{equation}
for all $p\in[1,\infty]$.
\end{enumerate}

We can now define the Hardy spaces for Fourier integral operators.

\begin{definition}\label{def:HpFIO}
For $p\in[1,\infty)$, $\Hp$ consists of those $f\in\Sw'(\Rn)$ such that $q(D)f\in L^{p}(\Rn)$, $\ph_{\w}(D)f\in \HT^{p}(\Rn)$ for almost all $\w\in S^{n-1}$, and
\[
\Big(\int_{S^{n-1}}\|\ph_{\w}(D)f\|_{\HT^{p}(\Rn)}^{p}\ud\w\Big)^{1/p}<\infty,
\]
endowed with the norm
\[
\|f\|_{\Hp}:=\|q(D)f\|_{L^{p}(\Rn)}+\Big(\int_{S^{n-1}}\|\ph_{\w}(D)f\|_{\HT^{p}(\Rn)}^{p}\ud\w\Big)^{1/p}.
\]
Moreover, $\HT^{\infty}_{FIO}(\Rn):=(\HT^{1}_{FIO}(\Rn))^{*}$. For $p\in[1,\infty]$ and $s\in\R$, $\Hps$ consists of all $f\in\Sw'(\Rn)$ such that $\lb D\rb^{s}f\in\Hp$, endowed with the norm
\[
\|f\|_{\Hps}:=\|\lb D\rb^{s}f\|_{\Hp}.
\] 
\end{definition}

It is straightforward to see that, for $1\leq p<\infty$ and $s\in\R$, one has
\begin{equation}\label{eq:HpFIOnorm}
\|f\|_{\Hps}\eqsim \|q(D)f\|_{L^{p}(\Rn)}+\Big(\int_{S^{n-1}}\|\ph_{\w}(D)f\|_{\HT^{s,p}(\Rn)}^{p}\ud\w\Big)^{1/p}
\end{equation}
for all $f\in\Hps$, with implicit constants independent of $f$. 

We note that the present definition of $\Hp$ is not how these spaces were originally defined. In \cite{Smith98a} and \cite{HaPoRo20} they were defined in terms of conical square functions, or equivalently tent spaces, over the cosphere bundle. That $\Hp$ can be equivalently described in this manner was shown by the author \cite{Rozendaal21} for $1<p<\infty$, and by Fan, Liu, Song and the author \cite{FaLiRoSo19} for $p=1$.

\subsection{Properties}

We now collect some properties of the Hardy spaces for Fourier integral operators.

Firstly, as was already indicated in \eqref{eq:Sobolev1} in the case where $p\in(1,\infty)$ and $s=0$, the following Sobolev embeddings hold for all $p\in[1,\infty]$ and $s\in\R$: 
\begin{equation}\label{eq:sobolev}
\HT^{s+s(p),p}(\Rn)\subseteq\Hps\subseteq\HT^{s-s(p),p}(\Rn).
\end{equation}
Here $s(p)$ is as in \eqref{eq:sp}, and the exponents in these embeddings cannot be improved. In particular, one has $\HT^{s,2}_{FIO}(\Rn)=W^{s,2}(\Rn)$.

Secondly, the Hardy spaces for Fourier integral operators form a complex interpolation scale. More precisely, let $p_{0},p_{1}\in[1,\infty]$, $s_{0},s_{1}\in\R$ and $\theta\in[0,1]$ be given, and let $p\in[1,\infty]$ and $s\in\R$ be such that $\frac{1}{p}=\frac{1-\theta}{p_{0}}+\frac{\theta}{p_{1}}$ and $s=(1-\theta)s_{0}+\theta s_{1}$. Then 
\begin{equation}\label{eq:HpFIOinter}
[\HT^{s_{0},p_{0}}_{FIO}(\Rn),\HT^{s_{1},p_{1}}_{FIO}(\Rn)]_{\theta}=\Hps.
\end{equation}
This statement extends \cite[Proposition 6.7]{HaPoRo20}, and it follows in the same manner as that proposition, from established results about complex interpolation of (weighted) tent spaces. For a proof see \cite[Corollary 3.5]{Hassell-Rozendaal20}.

We also note, for later use, that the Schwartz functions with compact Fourier support lie dense in $\Hps$ for all $p\in[1,\infty)$ and $s\in\R$. For $s=0$, the fact that the Schwartz functions lie dense is \cite[Proposition 6.6]{HaPoRo20}, from which the corresponding statement for general $s$ readily follows. To see that one may assume that the functions have compact Fourier support, one can either approximate general Schwartz functions, or inspect the proof of \cite[Proposition 6.6]{HaPoRo20} and use results about density of compactly supported functions in tent spaces.

An essential role will be played in this article by the following version of the standard result on interpolation of analytic families of operators in our setting. Throughout, set
\begin{equation}\label{eq:S}
S:=\{z\in\C\mid 0<\Real(z)<1\}.
\end{equation}
Recall that, for Banach spaces $X$ and $Y$ embedded into a Hausdorff topological vector space $Z$, the subspaces $X\cap Y$ and $X+Y$ are Banach spaces with the norms
\[
\|z\|_{X\cap Y}:=\max(\|z\|_{X},\|z\|_{Y})
\]
for $z\in X\cap Y$, and
\[
\|z\|_{X+Y}:=\inf\{\|x\|_{X}+\|y\|_{Y}\mid x\in X,y\in Y, x+y=z\}
\]
for $z\in X+Y$.

\begin{proposition}\label{prop:interan}
Let $p_{0},p_{1}\in[1,\infty]$, $s_{0},s_{1},t_{0},t_{1}\in\R$ and $\theta\in[0,1]$. Let $p\in[1,\infty]$ and $s,t\in\R$ be such that $\frac{1}{p}=\frac{1-\theta}{p_{0}}+\frac{\theta}{p_{1}}$, $s=(1-\theta)s_{0}+\theta s_{1}$ and $t=(1-\theta)t_{0}+\theta t_{1}$. Let  
\[
F:\overline{S}\to\La\big(\HT^{s_{0},p_{0}}_{FIO}(\Rn)\cap \HT^{s_{1},p_{1}}_{FIO}(\Rn),\HT^{t_{0},p_{0}}_{FIO}(\Rn)+\HT^{t_{1},p_{1}}_{FIO}(\Rn)\big).
\]
Suppose that the following conditions hold for all $f\in \HT^{s_{0},p_{0}}_{FIO}(\Rn)\cap \HT^{s_{1},p_{1}}_{FIO}(\Rn)$.
\begin{enumerate}
\item\label{it:inter1} The $\HT^{t_{0},p_{0}}_{FIO}(\Rn)+\HT^{t_{1},p_{1}}_{FIO}(\Rn)$-valued map $z\mapsto F(z)f$ is continuous and bounded on $\overline{S}$, and holomorphic on $S$.
\item\label{it:inter2} One has 
\[
[\tau\mapsto F(i\tau)f]\in C(\R;\HT^{t_{0},p_{0}}_{FIO}(\Rn))\text{ and }[\tau\mapsto F(1+i\tau)f]\in C(\R;\HT^{t_{1},p_{1}}_{FIO}(\Rn)).
\]
\item\label{it:inter3} One has $F(i\tau):\HT^{s_{0},p_{0}}_{FIO}(\Rn)\to \HT^{t_{0},p_{0}}_{FIO}(\Rn)$ and $F(1+i\tau):\HT^{s_{1},p_{1}}_{FIO}(\Rn)\to \HT^{t_{1},p_{1}}_{FIO}(\Rn)$ for each $\tau\in\R$, with
\[
M_{0}:=\sup_{\tau\in\R}\|F(i\tau)\|_{\La(\HT^{s_{0},p_{0}}_{FIO}(\Rn),\HT^{t_{0},p_{0}}_{FIO}(\Rn))}<\infty
\]
and
\[
M_{1}:=\sup_{\tau\in\R}\|F(1+i\tau)\|_{\La(\HT^{s_{1},p_{1}}_{FIO}(\Rn),\HT^{t_{1},p_{1}}_{FIO}(\Rn))}<\infty.
\]
\end{enumerate}
Then $F(\theta):\Hps\to \Hpt$ is bounded, with 
\[
\|F(\theta)\|_{\La(\Hps,\Hpt))}\leq M_{0}^{1-\theta}M_{1}^{\theta}.
\]
\end{proposition}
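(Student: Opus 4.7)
The plan is to derive the proposition from the interpolation identity \eqref{eq:HpFIOinter} together with the standard three-lines argument behind Stein's interpolation theorem. Set $X_j := \HT^{s_j,p_j}_{FIO}(\Rn)$ and $Y_j := \HT^{t_j,p_j}_{FIO}(\Rn)$ for $j=0,1$, so that \eqref{eq:HpFIOinter} identifies $\Hps = [X_0,X_1]_\theta$ and $\Hpt = [Y_0,Y_1]_\theta$ with equivalent norms. Since $X_0 \cap X_1$ is dense in $[X_0, X_1]_\theta$, it suffices to prove the asserted bound for $f \in X_0 \cap X_1$ and then extend by continuity. We may also assume $M_0, M_1 > 0$, since the general case follows by replacing $M_j$ with $\max(M_j, \varepsilon)$ in the argument below and letting $\varepsilon \to 0$ at the end.

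Fix $f \in X_0 \cap X_1$ and $\varepsilon > 0$. By the density of simple admissible functions in Calder\'on's space $\mathcal{F}(X_0, X_1)$, choose
\[
g(z) = \sum_{j=1}^{N} e^{\delta z^2 + \mu_j z}\, x_j, \qquad \delta > 0,\ \mu_j \in \C,\ x_j \in X_0 \cap X_1,
\]
with $g(\theta) = f$ and $\|g\|_{\mathcal{F}(X_0, X_1)} \leq (1+\varepsilon)\|f\|_{[X_0, X_1]_\theta}$, and define the auxiliary function
\[
H(z) := M_0^{z-1} M_1^{-z}\, F(z) g(z) \qquad (z \in \overline{S}),
\]
which is well-defined in $Y_0 + Y_1$ because each $x_j$ lies in the common domain $X_0 \cap X_1$ of the operators $F(z)$. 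The scalar prefactor is chosen so that $|M_0^{z-1} M_1^{-z}|$ equals $M_0^{-1}$ on $\Real z = 0$ and $M_1^{-1}$ on $\Real z = 1$.

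The technical heart of the argument is the verification that $H$ belongs to $\mathcal{F}(Y_0, Y_1)$. Hypothesis (1), applied summand-by-summand and combined with the Gaussian decay from the factor $e^{\delta z^2}$ in $\Imag z$, shows that $H$ is bounded continuous on $\overline{S}$ with values in $Y_0 + Y_1$ and holomorphic on $S$. Hypothesis (2) upgrades the boundary traces to continuous functions into $Y_0$ and $Y_1$ respectively, as required by the definition of $\mathcal{F}(Y_0, Y_1)$; and hypothesis (3) combined with the scaling above yields
\[
\|H(i\tau)\|_{Y_0} \leq \|g(i\tau)\|_{X_0}, \qquad \|H(1+i\tau)\|_{Y_1} \leq \|g(1+i\tau)\|_{X_1},
\]
so that $\|H\|_{\mathcal{F}(Y_0, Y_1)} \leq \|g\|_{\mathcal{F}(X_0, X_1)}$. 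Since $F(\theta) f = M_0^{1-\theta} M_1^{\theta}\, H(\theta)$, the definition of the complex interpolation norm yields
\[
\|F(\theta) f\|_{[Y_0, Y_1]_\theta} \leq M_0^{1-\theta} M_1^{\theta}\, \|H\|_{\mathcal{F}(Y_0, Y_1)} \leq (1+\varepsilon) M_0^{1-\theta} M_1^{\theta}\, \|f\|_{[X_0, X_1]_\theta};
\]
letting $\varepsilon \to 0$ finishes the argument. The main obstacle is the admissibility check: it is precisely condition (2) that promotes the $(Y_0 + Y_1)$-valued continuity supplied by (1) to continuity of the boundary traces into the individual endpoint spaces, as demanded by the definition of $\mathcal{F}(Y_0, Y_1)$.
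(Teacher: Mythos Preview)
Your argument is correct and follows the same route as the paper: the paper's proof simply cites a Banach-space Stein interpolation theorem (e.g.\ \cite[Theorem 2.1.7]{Lunardi09}) and combines it with the interpolation identity \eqref{eq:HpFIOinter}, whereas you have unpacked that black-box result and written out the Calder\'on $\mathcal{F}$-space three-lines argument explicitly. The added detail is fine, and you have correctly identified the role of each hypothesis---in particular that condition \eqref{it:inter2} is what makes the boundary traces of $H$ land in $Y_0$ and $Y_1$ individually rather than merely in $Y_0+Y_1$.
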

\begin{proof}
This is just a combination of a Banach space version of the proposition (see e.g.~\cite[Theorem 2.1.7]{Lunardi09}) and \eqref{eq:HpFIOinter}.
\end{proof}

We note that if the operator norms of the $F(z)$ are uniformly bounded, i.e.~if there exists an $M\geq0$ such that
\[
\|F(z)\|_{\La(\HT^{s_{0},p_{0}}_{FIO}(\Rn)\cap \HT^{s_{1},p_{1}}_{FIO}(\Rn),\HT^{t_{0},p_{0}}_{FIO}(\Rn)+\HT^{t_{1},p_{1}}_{FIO}(\Rn))}\leq M
\]
for all $z\in\overline{S}$, then it suffices to check the continuity and analyticity conditions in \eqref{it:inter1} and \eqref{it:inter2} for all $f$ in a dense subset of $\HT^{s_{0},p_{0}}_{FIO}(\Rn)\cap \HT^{s_{1},p_{1}}_{FIO}(\Rn)$.

\section{Rough symbols}\label{sec:symbols}

In this section we introduce our classes of rough symbols, as well as a paradifferential symbol smoothing procedure. 

\subsection{Function spaces}

To define our rough symbol classes, we first introduce function spaces which measure the regularity of a symbol in its spatial variable.

Throughout, we fix a Littlewood--Paley decomposition $(\psi_{j})_{j=0}^{\infty}\subseteq C^{\infty}_{c}(\Rn)$. That is, for all $\xi\in\Rn$ one has
\[
\sum_{j=0}^{\infty}\psi_{j}(\xi)=1,
\]
$\psi_{0}(\xi)=0$ if $|\xi|>1$, $\psi_{1}(\xi)=0$ if $|\xi|\notin [1/2,2]$, and $\psi_{j}(\xi)=\psi_{1}(2^{-j+1}\xi)$ for all $j>1$. For notational convenience we also write $\psi_{j}:=0$ for $j<0$.

For $r\in\R$, we let the \emph{Zygmund space} $C^{r}_{*}(\Rn)$ consist of those $f\in\Sw'(\Rn)$ such that $\psi_{j}(D)f\in L^{\infty}(\Rn)$ for all $j\geq0$, with
\begin{equation}\label{eq:defZyg}
\|f\|_{C^{r}_{*}(\Rn)}:=\sup_{j\geq0}2^{jr}\|\psi_{j}(D)f\|_{L^{\infty}(\Rn)}<\infty.
\end{equation}
Clearly the contractive embedding
\begin{equation}\label{eq:Zygmund1}
C^{r}_{*}(\Rn)\subseteq C^{t}_{*}(\Rn)
\end{equation}
holds for all $t<r$. 

Let $r>0$ and write $r=l+s$ with $l\in\Z_{+}$ and $s\in(0,1]$. Then (see~\cite{Triebel10})
\begin{equation}\label{eq:Zygmund2}
\Hrinf(\Rn)\subsetneq C^{r}_{*}(\Rn)=C^{r}(\Rn)\cap L^{\infty}(\Rn)\subsetneq C^{l}(\Rn)\cap L^{\infty}(\Rn)
\end{equation}
if $r\notin\N$, i.e.~if $s\in(0,1)$, and
\[
C^{r}(\Rn)\cap L^{\infty}(\Rn)\subsetneq C^{l,1}(\Rn)\cap L^{\infty}(\Rn)\subsetneq \Hrinf(\Rn)\subsetneq C^{r}_{*}(\Rn)
\]
if $r\in\N$, i.e.~if $s=1$. Here $\Hrinf(\Rn)$ is as in \eqref{eq:classical}. We recall that, for $r\notin\N$, $C^{r}(\Rn)$ consists of those $f\in C^{l}(\Rn)$ such that for each $\alpha\in\Z_{+}^{n}$ with $|\alpha|=l$, the partial derivative $\partial_{x}^{\alpha}f$ is H\"{o}lder continuous with parameter $s$. Also, $C^{l,1}(\Rn)$ consists of those $f\in C^{l}(\Rn)$ such that $\partial_{x}^{\alpha}f$ is Lipschitz for each $\alpha\in\Z_{+}^{n}$ with $|\alpha|=l$. In particular, by combining \eqref{eq:Zygmund1} and \eqref{eq:Zygmund2}, it follows that there exists a constant $M_{r}\geq0$ such that $C^{r}_{*}(\Rn)\subseteq C^{l}(\Rn)\cap L^{\infty}(\Rn)$ and, for all $f\in C^{r}_{*}(\Rn)$,
\begin{equation}\label{eq:Zygmund3}
\max_{|\alpha|\leq l}\|\partial_{x}^{\alpha}f\|_{L^{\infty}(\Rn)}\leq M_{r}\|f\|_{C^{r}_{*}(\Rn)}.
\end{equation}
We also note that $C^{r}_{*}(\Rn)$ is equal to the Besov space $B^{r}_{\infty,\infty}(\Rn)$.

\subsection{Rough symbols}

Recall that, for $m\in\R$ and $\delta\in[0,1]$, the symbol class $S^{m}_{1,\delta}$ is the space of $a\in C^{\infty}(\R^{2n})$ such that, for all $\alpha,\beta\in\Z_{+}^{n}$, there exists an $M_{\alpha,\beta}\geq0$ with
\[
|\partial_{x}^{\beta}\partial_{\eta}^{\alpha}a(x,\eta)|\leq M_{\alpha,\beta}\lb\eta\rb^{m-|\alpha|+\delta|\beta|}
\]
for all $x,\eta\in\Rn$. We now introduce versions of these symbols that have limited regularity in the $x$ variable.

\begin{definition}\label{def:rough}
Let $r>0$, $m\in\R$, $\delta\in[0,1]$ and $l\in\N$. Then $C^{r}_{*}S^{m,l}_{1,\delta}$ is the collection of $a:\R^{2n}\to\C$ for which there exists an $M\geq0$ such that, for each $\alpha\in\Z_{+}^{n}$ with $|\alpha|\leq l$, the following properties hold:
\begin{enumerate}
\item\label{it:symbol1} For all $x,\eta\in\Rn$ one has $a(x,\cdot)\in C^{l}(\Rn)$ and
\begin{equation}\label{eq:etareg}
|\partial_{\eta}^{\alpha}a(x,\eta)|\leq M\lb\eta\rb^{m-|\alpha|}.
\end{equation}
\item\label{it:symbol2} For all $\eta\in\Rn$ one has $\partial_{\eta}^{\alpha}a(\cdot,\eta)\in C^{r}_{*}(\Rn)$ and
\begin{equation}\label{eq:xreg}
\|\partial_{\eta}^{\alpha}a(\cdot,\eta)\|_{C^{r}_{*}(\Rn)}\leq M\lb\eta\rb^{m-|\alpha|+r\delta}.
\end{equation}
\end{enumerate} 
Moreover, $C^{r}_{*}S^{m}_{1,\delta}:=\cap_{l\geq1}C^{r}_{*}S^{m,l}_{1,\delta}$, and $\Hrinf S^{m}_{1,\delta}$ consists of all $a\in C^{r}_{*}S^{m}_{1,\delta}$ with the following additional property. For each $\alpha\in\Z_{+}^{n}$ there exists an $M_{\alpha}\geq0$ such that, for all $\eta\in\Rn$, one has $\partial_{\eta}^{\alpha}a(\cdot,\eta)\in \Hrinf(\Rn)$ and 
\[
\|\partial_{\eta}^{\alpha}a(\cdot,\eta)\|_{\Hrinf(\Rn)}\leq M_{\alpha}\lb\eta\rb^{m-|\alpha|+r\delta}.
\]
\end{definition}

Note that $C^{r}_{*}S^{m,l}_{1,\delta}$ is a Banach space, with norm $\|a\|_{C^{r}_{*}S^{m,l}_{1,\delta}}$ given by the smallest $M\geq0$ such that \eqref{eq:etareg} and \eqref{eq:xreg} hold. Moreover, $C^{r}_{*}S^{m}_{1,\delta}$ is a locally convex space with the topology generated by the associated collection of seminorms, and similarly for $\Hrinf S^{m}_{1,\delta}$.

\begin{remark}\label{rem:finitel}
The reason for including the additional parameter $l\in\N$ in Definition \ref{def:rough} is that we will need to keep track of estimates for operator norms of $a(x,D)$. 
\end{remark}

Note that
\[
S^{m}_{1,\delta}\subsetneq \Hrinf S^{m,l}_{1,\delta}\subsetneq C^{r}_{*}S^{m,l}_{1,\delta}
\]
for all $r>0$, $m\in\R$, $\delta\in[0,1]$ and $l\in\N$. Moreover, the contractive embeddings  
\begin{equation}\label{eq:symbolinc}
C^{r}_{*}S^{m-(\beta-\delta)r,l}_{1,\beta}\subseteq C^{r}_{*}S^{m,l}_{1,\delta}
\end{equation}
and
\[
\Hrinf S^{m-(\beta-\delta)r}_{1,\beta}\subseteq \Hrinf S^{m}_{1,\delta}
\]
hold for all $\beta\in[\delta,1]$ and $l\in\N$, as one can readily check.

Given $a\in C^{r}_{*}S^{m,l}_{1,\delta}$ for some $r>0$, $m\in\R$, $\delta\in[0,1]$ and $l\in\N$, the pseudodifferential operator $a(x,D):\Sw(\Rn)\to\Sw'(\Rn)$ with symbol $a$ is given by
\begin{equation}\label{eq:pseudodef}
a(x,D)f(x):=\frac{1}{(2\pi)^{n}}\int_{\Rn}e^{ix\eta}a(x,\eta)\wh{f}(\eta)\ud\eta
\end{equation}
for $f\in\Sw(\Rn)$ and $x\in\Rn$.

\begin{remark}\label{rem:dualitydef}
In \eqref{eq:pseudodef} we only defined $a(x,D)f$ for $f\in\Sw(\Rn)$, whereas the Schwartz functions do not lie dense in $\HT^{s,\infty}_{FIO}(\Rn)$ for any $s\in\R$. Since $\HT^{s,\infty}_{FIO}(\Rn)$ will hardly play a role in this article, we will not concern ourselves with the subtleties of the adjoints of pseudodifferential operators with rough symbols (see, however, \cite[Remark 2.6]{Rozendaal20} for slightly more on this).
\end{remark}

\subsection{Symbol smoothing}

Next, we introduce a symbol smoothing procedure which decomposes a rough symbol as a sum of a smooth part and a rough part with lower differential order. Let $a\in C^{r}_{*}S^{m}_{1,\delta}$ for $r>0$, $m\in\R$ and $\delta\in[0,1]$. Let $\beta\in[\delta,1]$ be given, and recall from Section \ref{sec:Hardy} that $\ph\in C^{\infty}_{c}(\Rn)$ is such that $\ph\equiv1$ near zero. Now set, for $x,\eta\in\Rn$,
\[
a^{\sharp}_{\beta}(x,\eta):=\sum_{k=0}^{\infty}\big(\ph(2^{-\beta k}D)a(\cdot,\eta)\big)(x)\psi_{k}(\eta)
\]
and
\[
a^{\flat}_{\beta}(x,\eta):=a(x,\eta)-a^{\sharp}_{\beta}(x,\eta)=\sum_{k=0}^{\infty}\big((1-\ph)(2^{-\beta k}D)a(\cdot,\eta)\big)(x)\psi_{k}(\eta).
\]
For the final identity we used that $\sum_{k=0}^{\infty}\psi_{k}(\eta)=1$. The decomposition $a=a^{\sharp}_{\beta}+a^{\flat}_{\beta}$ has the following properties, cf.~\cite[Lemma 2.8]{Rozendaal20} (see also \cite[Section 1.3]{Taylor91}).

\begin{lemma}\label{lem:smoothing}
Let $r>0$, $m\in\R$ and $\delta,\beta\in[0,1]$ with $\beta\geq\delta$. Then, for each $a\in C^{r}_{*}S^{m}_{1,\delta}$, one has $a^{\sharp}_{\beta}\in S^{m}_{1,\beta}$ and $a^{\flat}_{\beta}\in C^{r}_{*}S^{m-(\beta-\delta)r}_{1,\beta}$. 
If $a\in \Hrinf S^{m}_{1,\delta}$, then one additionally has $a^{\flat}_{\beta}\in \Hrinf S^{m-(\beta-\delta)r}_{1,\beta}$. 
\end{lemma}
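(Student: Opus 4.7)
The plan is to verify the two symbol estimates in Definition \ref{def:rough} for each piece of the decomposition, exploiting two structural facts: first, the spatial Fourier support of $\ph(2^{-\beta k}D)a(\cdot,\eta)$ is contained in $\{|\xi|\lesssim 2^{\beta k}\}$, while $(1-\ph)(2^{-\beta k}D)a(\cdot,\eta)$ is supported in $\{|\xi|\gtrsim 2^{\beta k}\}$; second, for each fixed $\eta$ only a uniformly bounded number of indices $k$ (those with $2^{k}\simeq |\eta|$) contribute to either sum, because of the frequency localization imposed by $\psi_{k}(\eta)$.

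For $a^{\sharp}_{\beta}\in S^{m}_{1,\beta}$, I would estimate $\partial_{x}^{\gamma}\partial_{\eta}^{\alpha}a^{\sharp}_{\beta}(x,\eta)$ by Leibniz in $\eta$, splitting derivatives between $\ph(2^{-\beta k}D)a(\cdot,\eta)$ (producing $\partial_{\eta}^{\alpha_{1}}a(\cdot,\eta)$, bounded in $L^{\infty}$ by $\langle\eta\rangle^{m-|\alpha_{1}|}$ via \eqref{eq:etareg}) and $\psi_{k}$ (producing $|\psi_{k}^{(\alpha_{2})}(\eta)|\lesssim 2^{-k|\alpha_{2}|}\simeq \langle\eta\rangle^{-|\alpha_{2}|}$). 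The $x$-derivatives are then absorbed by Bernstein's inequality: since $\ph(2^{-\beta k}D)g$ has compact Fourier support in $\{|\xi|\le 2^{\beta k}\}$, one has $\|\partial_{x}^{\gamma}\ph(2^{-\beta k}D)g\|_{\infty}\lesssim 2^{\beta k|\gamma|}\|g\|_{\infty}$. Summing over the $O(1)$ active indices $k$ with $2^{k}\simeq|\eta|$ gives the required bound $\langle\eta\rangle^{m-|\alpha|+\beta|\gamma|}$.

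For $a^{\flat}_{\beta}\in C^{r}_{*}S^{m-(\beta-\delta)r}_{1,\beta}$, I need both \eqref{eq:etareg} at the reduced order $m-(\beta-\delta)r$ and the $C^{r}_{*}$-bound \eqref{eq:xreg}. The first rests on the key Bernstein-type inequality
\[
\|(1-\ph)(2^{-\beta k}D)f\|_{L^{\infty}(\Rn)}\lesssim 2^{-\beta k r}\|f\|_{C^{r}_{*}(\Rn)},
\]
which follows from the Littlewood--Paley characterization of $C^{r}_{*}$: the multiplier $(1-\ph)(2^{-\beta k}\cdot)\psi_{j}$ vanishes unless $2^{j}\gtrsim 2^{\beta k}$, and each surviving term contributes $\|\psi_{j}(D)f\|_{\infty}\lesssim 2^{-jr}\|f\|_{C^{r}_{*}}$, summing to $2^{-\beta kr}\|f\|_{C^{r}_{*}}$. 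Applying this to $f=\partial_{\eta}^{\alpha_{1}}a(\cdot,\eta)$, whose $C^{r}_{*}$-norm is controlled by \eqref{eq:xreg}, and combining with $|\psi_{k}^{(\alpha_{2})}(\eta)|\lesssim\langle\eta\rangle^{-|\alpha_{2}|}$ and finite-overlap summation, yields $|\partial_{\eta}^{\alpha}a^{\flat}_{\beta}(x,\eta)|\lesssim\langle\eta\rangle^{m-(\beta-\delta)r-|\alpha|}$. For the $C^{r}_{*}$ bound, I would instead use that $(1-\ph)(2^{-\beta k}D)$ is convolution with a uniformly $L^{1}$-bounded kernel (obtained by rescaling the inverse Fourier transform of $1-\ph$), hence acts on $C^{r}_{*}(\Rn)$ with operator norm independent of $k$; combining with \eqref{eq:xreg} for $a$ gives $\|\partial_{\eta}^{\alpha}a^{\flat}_{\beta}(\cdot,\eta)\|_{C^{r}_{*}}\lesssim\langle\eta\rangle^{m-|\alpha|+r\delta}=\langle\eta\rangle^{(m-(\beta-\delta)r)-|\alpha|+r\beta}$, as required. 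The $\Hrinf$ statement transfers verbatim: $\Hrinf$ admits the same Littlewood--Paley characterization, the displayed inequality holds with $C^{r}_{*}$ replaced by $\Hrinf$, and convolution by $L^{1}$ kernels acts boundedly on $\Hrinf$.

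The only nontrivial step is the displayed Bernstein-type inequality; all other arguments are Leibniz bookkeeping and summation over the bounded set of $k$ with $2^{k}\simeq|\eta|$. One small bookkeeping point worth watching is that $(1-\ph)$ is unbounded, so to justify the $L^{1}$-bounded-kernel claim one should rewrite $(1-\ph)(2^{-\beta k}D)=\mathrm{id}-\ph(2^{-\beta k}D)$ and use uniform boundedness of the latter.
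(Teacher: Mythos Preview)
Your argument is correct and is the standard one; the paper itself does not give a proof but simply cites \cite[Lemma 2.8]{Rozendaal20} and \cite[Section 1.3]{Taylor91}, where exactly this reasoning is carried out. One small point: your phrase ``$\Hrinf$ admits the same Littlewood--Paley characterization'' is imprecise, since $\Hrinf$ is $\bmo$-based rather than $L^{\infty}$-based, but this does not affect the proof---the displayed Bernstein-type inequality for $\Hrinf$ follows from the $C^{r}_{*}$ version via the embedding $\Hrinf(\Rn)\subseteq C^{r}_{*}(\Rn)$ in \eqref{eq:Zygmund2}, and the uniform $\Hrinf$-boundedness of $(1-\ph)(2^{-\beta k}D)$ follows, as you indicate, from writing it as $\mathrm{id}-\ph(2^{-\beta k}D)$ and using that $L^{1}$ convolution operators act boundedly on $\bmo$.
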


\section{Preliminary results}\label{sec:prelim}

This section contains several preliminary results which will play an important role in the proof of our main theorem. We first prove a technical statement about rough symbols which will be used for the interpolation argument in the proof of our main theorem, and then we collect some results on boundedness of pseudodifferential operators.

\subsection{Results for interpolation}

In this subsection we prove a technical proposition about rough symbols, for the interpolation procedure in the proof of our main result.

We first collect two basic statements about Zygmund spaces. Recall that $S=\{z\in\C\mid 0<\Real(z)<1\}$, as in \eqref{eq:S}.

\begin{lemma}\label{lem:Zyg}
There exists an $M\geq 0$ such that 
\begin{equation}\label{eq:Zyg3}
\begin{aligned}
\tfrac{1}{3}2^{(j-1)\Real(\kappa z+\lambda)}\|\psi_{j}(D)f\|_{L^{\infty}(\Rn)}&\leq \|\psi_{j}(D)f\|_{C^{\Real(\kappa z+\lambda)}_{*}(\Rn)}\\
&\leq M2^{(j+1)\Real(\kappa z+\lambda)}\|\psi_{j}(D)f\|_{L^{\infty}(\Rn)}
\end{aligned}
\end{equation}
for all $\kappa,\lambda\in \R$, $z\in\overline{S}$, $j\geq0$ and $f\in L^{\infty}(\Rn)$. Moreover, for all $\kappa,\lambda\in\R$ there exists an $M_{\kappa,\lambda}\geq0$ such that, for all $z\in\overline{S}$, one has
\begin{equation}\label{eq:Zyg1}
\|e^{(\kappa z+\lambda)^{2}}\lb D\rb^{\kappa z+\lambda}\|_{\La(C^{0}_{*}(\Rn),C^{-\Real(\kappa z+\lambda)}_{*}(\Rn))}\leq M_{\kappa,\lambda}
\end{equation}
and
\begin{equation}\label{eq:Zyg2}
\|e^{(\kappa z+\lambda)^{2}}\lb D\rb^{\kappa z+\lambda}\|_{\La(C^{\Real(\kappa z+\lambda)}_{*}(\Rn),C^{0}_{*}(\Rn))}\leq M_{\kappa,\lambda}.
\end{equation}
\end{lemma}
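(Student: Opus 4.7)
For the two-sided bound \eqref{eq:Zyg3}, the plan is to exploit the almost-orthogonality of the Littlewood--Paley pieces. Writing $r := \Real(\kappa z + \lambda)$ for brevity, the support properties of the $\psi_j$ imply $\psi_k(D)\psi_j(D) \equiv 0$ whenever $|k-j| \geq 2$, so the supremum defining $\|\psi_j(D)f\|_{C^r_*(\Rn)}$ is effectively taken over at most three indices $k \in \{j-1, j, j+1\} \cap \Z_+$. Combined with the uniform bound $\sup_{k} \|\psi_k(D)\|_{\La(L^\infty(\Rn))} < \infty$, which follows by scaling from $\psi_1^\vee \in L^1(\Rn)$, this immediately gives the upper bound after replacing $2^{kr}$ by the appropriate extreme value in the allowed range of $k$. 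The lower bound follows in the same spirit by writing $\psi_j(D)f = \sum_{|k-j| \leq 1}\psi_k(D)\psi_j(D)f$ and applying the triangle inequality, then rearranging the defining inequality of the $C^r_*$ norm to extract the factor $\frac{1}{3}$.

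For the two operator estimates \eqref{eq:Zyg1} and \eqref{eq:Zyg2}, my plan is to split the Bessel potential into its real and imaginary parts,
\[
\lb D\rb^{\kappa z + \lambda} = \lb D\rb^{\kappa \Real z + \lambda}\cdot \lb D\rb^{i \kappa \Imag z},
\]
and to treat the two factors separately. The real-order Bessel potential $\lb D\rb^{\rho}$ with $\rho := \kappa \Real z + \lambda$ is classically an isomorphism from $B^{r}_{\infty,\infty}(\Rn)$ onto $B^{r-\rho}_{\infty,\infty}(\Rn) = C^{r-\rho}_*(\Rn)$ for every $r \in \R$, and since $\Real z \in [0,1]$ the order $\rho$ lies in the bounded interval with endpoints $\lambda$ and $\kappa + \lambda$. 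Hence this factor maps $C^0_*(\Rn)$ into $C^{-\rho}_*(\Rn)$, and $C^{\rho}_*(\Rn)$ into $C^0_*(\Rn)$, with operator norm controlled by a constant depending only on $\kappa$ and $\lambda$.

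The imaginary-order factor $\lb D\rb^{it}$ with $t := \kappa \Imag z$ is a Fourier multiplier whose symbol satisfies Mikhlin-type estimates $|\partial^\alpha \lb\xi\rb^{it}| \leq C_\alpha (1 + |t|)^{|\alpha|} \lb\xi\rb^{-|\alpha|}$, and a standard dyadic $L^1$-kernel computation then shows that $\lb D\rb^{it}$ acts boundedly on every Zygmund space with norm at most $C(1 + |t|)^N$ for some $N$ depending only on the dimension. Finally,
\[
|e^{(\kappa z + \lambda)^2}| = e^{(\kappa \Real z + \lambda)^2 - \kappa^2 (\Imag z)^2},
\]
so this Gaussian factor decays like $e^{-\kappa^2 (\Imag z)^2}$ in $\Imag z$ (with a uniformly bounded prefactor since $\Real z \in [0,1]$) and more than absorbs the polynomial growth from the imaginary Bessel potential. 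Multiplying the contributions from the two factors then yields the uniform bound $M_{\kappa,\lambda}$. The only mildly delicate step is the Mikhlin estimate for $\lb D\rb^{it}$ on the Besov scale, which is by now a classical tool.
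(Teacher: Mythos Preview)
Your argument for \eqref{eq:Zyg3} is correct and matches the paper's proof: both exploit that $\psi_k\psi_j\equiv 0$ for $|k-j|\geq 2$, together with the uniform $L^1$ bound on $\F^{-1}\psi_k$, to reduce the $C^r_*$ norm of $\psi_j(D)f$ to at most three terms.

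For \eqref{eq:Zyg1} and \eqref{eq:Zyg2} your approach is correct but organised slightly differently from the paper. You factor $\lb D\rb^{\kappa z+\lambda}=\lb D\rb^{\rho}\lb D\rb^{it}$ and invoke separately the lifting property of real-order Bessel potentials on the Besov scale and a Mikhlin bound for the imaginary-order piece, then absorb the polynomial growth in $t$ with the Gaussian $e^{-t^2}$. The paper instead reduces both \eqref{eq:Zyg1} and \eqref{eq:Zyg2} to a single dyadic estimate
\[
e^{-\Imag(z)^{2}}\|2^{-j(\kappa z+\lambda)}\lb D\rb^{\kappa z+\lambda}\psi_j(D)f\|_{L^\infty}\lesssim\|\psi_j(D)f\|_{L^\infty},
\]
proved by checking Mikhlin-type bounds for the localised symbols $\xi\mapsto 2^{-j(\kappa z+\lambda)}\lb\xi\rb^{\kappa z+\lambda}\wt\psi(2^{-j+1}\xi)$ uniformly in $j$ and $z$, and then applies \eqref{eq:Zyg3} to pass between the $L^\infty$ and $C^r_*$ norms. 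The underlying analytic content is the same (Mikhlin estimates with polynomial blow-up in $\Imag z$, damped by the Gaussian); your version leans more on the standard Besov lifting property as a black box, whereas the paper's is slightly more self-contained and recycles \eqref{eq:Zyg3} in the proof of the second part.
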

\begin{proof}
For the first statement, use that $\psi_{j}=\sum_{i=j-1}^{j+1}\psi_{i}\psi_{j}$ to write
\begin{align*}
\tfrac{1}{3}2^{(j-1)\Real(\kappa z+\lambda)}\|\psi_{j}(D)f\|_{L^{\infty}(\Rn)}&=\tfrac{1}{3}2^{(j-1)\Real(\kappa z+\lambda)}\Big\|\sum_{i=j-1}^{j+1}\psi_{i}(D)\psi_{j}(D)f\Big\|_{L^{\infty}(\Rn)}\\
&\leq \tfrac{1}{3}\sum_{i=j-1}^{j+1}2^{i\Real(\kappa z+\lambda)}\|\psi_{i}(D)\psi_{j}(D)f\|_{L^{\infty}(\Rn)}\\
&\leq \|\psi_{j}(D)f\|_{C^{\Real(\kappa z+\lambda)}_{*}(\Rn)}\\
&=\sup_{i\geq0}2^{i\Real(\kappa z+\lambda)}\|\psi_{i}(D)\psi_{j}(D)f\|_{L^{\infty}(\Rn)}\\
&\leq M2^{(j+1)\Real(\kappa z+\lambda)}\|\psi_{j}(D)f\|_{L^{\infty}(\Rn)}
\end{align*}
for all $z\in\overline{S}$, $j\geq0$ and $f\in L^{\infty}(\Rn)$, with $M:=\sup_{i\geq0}\|\F^{-1}\psi_{i}\|_{L^{1}(\Rn)}$. In the final inequality we used that $\psi_{i}\psi_{j}=0$ for $i\notin\{j-1,j,j+1\}$.

The remaining two statements follow from Fourier multiplier theory. Indeed, by cutting off the frequencies of $f$, for both \eqref{eq:Zyg1} and \eqref{eq:Zyg2} it suffices to show that
\begin{equation}\label{eq:Zygbound}
e^{-\Imag(z)^{2}}\|2^{-j(\kappa z+\lambda)}\lb D\rb^{\kappa z+\lambda}\psi_{j}(D)f\|_{L^{\infty}(\Rn)}\lesssim \|\psi_{j}(D)f\|_{L^{\infty}(\Rn)}
\end{equation}
for an implicit constant independent of $z\in\overline{S}$, $j\geq0$ and $f\in C^{0}_{*}(\Rn)$. To this end, let $\wt{\psi}\in\Sw(\Rn)$ be such that $\wt{\psi}\equiv 1$ on $\supp(\psi)$, and $\wt{\psi}(\xi)=0$ if $|\xi|\notin[1/4,4]$. Set $\wt{\psi}_{j,z}(\xi):=2^{-j(\kappa z+\lambda)}\lb\xi\rb^{\kappa z+\lambda}\wt{\psi}(2^{-j+1}\xi)$ for $j\geq1$ and $\xi\in\Rn$. Then
\[
2^{-j(\kappa z+\lambda)}\lb D\rb^{\kappa z+\lambda}\psi_{j}(D)f=\wt{\psi}_{j,z}(D)\psi_{j}(D)f.
\]
Now, for each $N\geq0$ there exist $C,L\geq0$ such that, for all $\alpha\in\Z_{+}^{n}$ with $|\alpha|\leq N$, and all $j\geq0$ and $z\in\overline{S}$, one has
\[
\sup_{\xi\in\Rn}\lb\xi\rb^{|\alpha|}\big|\partial_{\xi}^{\alpha}\wt{\psi}_{j,z}(\xi)\big|\leq C(1+|\Imag(z)|)^{L}.
\]
Hence the Mikhlin multiplier theorem on the Besov space $C^{0}_{*}(\Rn)=B^{0}_{\infty,\infty}(\Rn)$ (see \cite[Section 2.3.7]{Triebel10}) shows that the collection $\{e^{-\Imag(z)^{2}}\wt{\psi}_{j,z}(D)\mid j\geq0, z\in\overline{S}\}$ is uniformly bounded in $\La(C^{0}_{*}(\Rn))$. Thus we can use \eqref{eq:Zyg3} to write
\begin{align*}
e^{-\Imag(z)^{2}}\|2^{-j(\kappa z+\lambda)}\lb D\rb^{\kappa z+\lambda}\psi_{j}(D)f\|_{L^{\infty}(\Rn)}&\eqsim e^{-\Imag(z)^{2}}\|\wt{\psi}_{j}(D)\psi_{j}(D)f\|_{C^{0}_{*}(\Rn)}\\
&\lesssim \|\psi_{j}(D)f\|_{C^{0}_{*}(\Rn)}\eqsim\|\psi_{j}(D)f\|_{L^{\infty}(\Rn)}
\end{align*}
for implicit constants independent of $z\in\overline{S}$, $j\geq0$ and $f\in C^{0}_{*}(\Rn)$.
\end{proof}

The following proposition is the main result of this subsection. 

\begin{proposition}\label{prop:inter}
Let $r>0$, $m\in\R$, $\delta\in[0,1]$, $l\in\N$ and $\kappa,\lambda\in\R$ be such that $\kappa+\lambda<r$. Then for each $c>0$ there exists an $M\geq0$ such that the following holds. Let $a\in C^{r}_{*}S^{m,l}_{1,\delta}$ be such that, for all $\eta\in\Rn$, one has $\supp(\F a(\cdot,\eta))\subseteq\{\xi\in\Rn\mid |\xi|\geq c|\eta|^{\delta}\}$. For $z\in\overline{S}$, set
\begin{equation}\label{eq:defaz}
a_{z}(x,\eta):=e^{(\kappa z+\lambda)^{2}}\lb \eta\rb^{-\delta (\kappa z+\lambda)}\big(\lb D\rb^{\kappa z+\lambda}a(\cdot,\eta)\big)(x).
\end{equation}
Then $a_{z}\in C^{r-\Real(\kappa z+\lambda)}_{*}S^{m,l}_{1,\delta}$ and $\|a_{z}\|_{C^{r-\Real(\kappa z+\lambda)}_{*}S^{m,l}_{1,\delta}}\leq M\|a\|_{C^{r}_{*}S^{m,l}_{1,\delta}}$.
\end{proposition}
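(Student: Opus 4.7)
Set $\mu := \kappa z + \lambda$. Under the hypothesis $\kappa + \lambda < r$ the real part $\Real\mu$ stays strictly below $r$ uniformly for $z \in \overline{S}$, so $r - \Real\mu > 0$. My plan is to verify the two defining conditions of $C^{r - \Real\mu}_{*}S^{m,l}_{1,\delta}$ for $a_z$. Differentiating \eqref{eq:defaz} by Leibniz in $\eta$ expresses $\partial_\eta^\alpha a_z(x,\eta)$ as a sum, over $\alpha_1 + \alpha_2 = \alpha$, of terms $e^{\mu^2}\binom{\alpha}{\alpha_1}\bigl(\partial_\eta^{\alpha_1}\lb\eta\rb^{-\delta\mu}\bigr)\bigl(\lb D\rb^{\mu}\partial_\eta^{\alpha_2}a(\cdot,\eta)\bigr)(x)$, and the scalar factor satisfies $|\partial_\eta^{\alpha_1}\lb\eta\rb^{-\delta\mu}| \lesssim (1 + |\mu|)^{|\alpha_1|}\lb\eta\rb^{-\delta\Real\mu - |\alpha_1|}$. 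Everything therefore reduces to bounding $\lb D\rb^{\mu}\partial_\eta^{\alpha_2}a(\cdot,\eta)$ in $C^{r - \Real\mu}_{*}$ for \eqref{eq:xreg} and in $L^\infty$ for \eqref{eq:etareg}, each with the correct $\lb\eta\rb$-growth and with uniform control as $|\Imag z| \to \infty$; this last requirement is the entire reason for the normalising factor $e^{\mu^2}$ in the definition of $a_z$.

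For the Zygmund bound I would repeat the Mikhlin-multiplier computation underlying Lemma \ref{lem:Zyg}. For each $j$, the symbol $2^{-j\mu}\lb\xi\rb^{\mu}\psi_j(\xi)$ is compactly supported with $S^0_{1,0}$ seminorms growing polynomially in $|\mu|$, so $\|\psi_j(D)\lb D\rb^{\mu}g\|_{L^\infty} \lesssim (1 + |\Imag\mu|)^{L}\,2^{j\Real\mu}\,\|\widetilde\psi_j(D)g\|_{L^\infty}$ for a fattened $\widetilde\psi_j$. Multiplying by $2^{j(r-\Real\mu)}$ and taking the supremum over $j$ gives $\|\lb D\rb^{\mu}g\|_{C^{r - \Real\mu}_{*}} \lesssim (1 + |\Imag\mu|)^{L}\|g\|_{C^{r}_{*}}$. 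Substituting $g = \partial_\eta^{\alpha_2}a(\cdot,\eta)$, invoking \eqref{eq:xreg} for $a$, and absorbing the $(1+|\mu|)$-polynomial growth into $|e^{\mu^2}| = e^{(\Real\mu)^2 - (\Imag\mu)^2}$ (uniformly bounded on $\overline S$, since Gaussian decay in $\Imag\mu$ beats any polynomial and $\Real\mu$ lies in a compact interval) delivers \eqref{eq:xreg} for $a_z$ with regularity index $r - \Real\mu$.

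The pointwise bound \eqref{eq:etareg} is the only place where the spectral support hypothesis enters, and is the main technical obstacle. A plain embedding $C^{r-\Real\mu}_{*}\hookrightarrow L^\infty$ would leak an extra $\lb\eta\rb^{\delta(r-\Real\mu)}$, and the hypothesis is tailored to kill exactly this factor. Since $\F_{x}a(\xi,\eta) = 0$ whenever $|\xi| < c|\eta|^\delta$ and this condition is open in $\eta$, every $\eta$-derivative inherits the same vanishing; hence $\psi_j(D)\partial_\eta^{\alpha_2}a(\cdot,\eta) = 0$ for all $j$ below a threshold $j_0 = j_0(\eta)$ with $2^{j_0} \eqsim c\lb\eta\rb^\delta$. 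Inserting the per-frequency Mikhlin bound from the previous paragraph into the truncated dyadic sum and invoking the geometric series $\sum_{j \geq j_0} 2^{j(\Real\mu - r)} \eqsim \lb\eta\rb^{\delta(\Real\mu - r)}$ (convergent precisely because $\Real\mu < r$) yields $\|\lb D\rb^{\mu}\partial_\eta^{\alpha_2}a(\cdot,\eta)\|_{L^\infty} \lesssim (1+|\Imag\mu|)^{L}\|a\|_{C^{r}_{*}S^{m,l}_{1,\delta}}\lb\eta\rb^{m - |\alpha_2| + \delta\Real\mu}$. Plugging back into the Leibniz expansion cancels the $\lb\eta\rb^{\delta\Real\mu}$ against the scalar factor $\lb\eta\rb^{-\delta\Real\mu}$ and produces $|\partial_\eta^\alpha a_z(x,\eta)| \lesssim \|a\|_{C^{r}_{*}S^{m,l}_{1,\delta}}\lb\eta\rb^{m-|\alpha|}$ uniformly in $z \in \overline S$, completing the verification.
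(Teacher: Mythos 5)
Your proposal is correct and follows essentially the same route as the paper: the Zygmund bound comes from a uniform Mikhlin-type estimate for $e^{(\kappa z+\lambda)^{2}}\lb D\rb^{\kappa z+\lambda}$ on the scale of $C^{s}_{*}(\Rn)$ spaces (the content of Lemma \ref{lem:Zyg}), with the Gaussian factor absorbing the polynomial growth in $\Imag(z)$, while the $L^{\infty}$ bound uses the spectral support hypothesis to truncate the Littlewood--Paley sum at $2^{j_{0}}\eqsim\lb\eta\rb^{\delta}$ and sum the geometric series $\sum_{j\geq j_{0}}2^{j(\Real(\kappa z+\lambda)-r)}$, producing exactly the factor $\lb\eta\rb^{\delta(\Real(\kappa z+\lambda)-r)}$ that converts the $C^{r}_{*}$ growth $\lb\eta\rb^{m-|\alpha|+\delta r}$ into $\lb\eta\rb^{m-|\alpha|+\delta\Real(\kappa z+\lambda)}$. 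Your explicit Leibniz expansion in $\eta$, with the derivatives of $\lb\eta\rb^{-\delta(\kappa z+\lambda)}$ controlled by $(1+|\kappa z+\lambda|)^{|\alpha_{1}|}\lb\eta\rb^{-\delta\Real(\kappa z+\lambda)-|\alpha_{1}|}$, is a minor presentational refinement of a step the paper leaves implicit, not a different argument.
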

The support assumption on $a$ is used in the proof to deduce the supremum bounds in part \eqref{it:symbol1} of Definition \ref{def:rough}. For the Zygmund bounds in part \eqref{it:symbol2}, which measure the spatial regularity of the symbol, this assumption is not needed.
\begin{proof}
Let $z\in\overline{S}$ and $\alpha\in\Z_{+}^{n}$ be such that $|\alpha|\leq l$. By \eqref{eq:Zyg1} and \eqref{eq:Zyg2}, $\lb D\rb^{r}:C^{r}_{*}(\Rn)\to C^{0}_{*}(\Rn)$ is invertible. Hence, by factorizing through $C^{0}_{*}(\Rn)$ and using \eqref{eq:Zyg1}, we obtain
\[
\sup\{\|e^{(\kappa z+\lambda)^{2}}\lb D\rb^{\kappa z+\lambda}\|_{\La(C^{r}_{*}(\Rn),C^{r-\Real(\kappa z+\lambda)}_{*}(\Rn))}\mid z\in\overline{S}\}<\infty.
\]
It follows that $e^{(\kappa z+\lambda)^{2}}\partial_{\eta}^{\alpha}\lb D\rb^{\kappa z+\lambda}a(\cdot,\eta)\in C^{r-\Real(\kappa z+\lambda)}_{*}(\Rn)$ for each $\eta\in\Rn$, with
\begin{align*}
&\|e^{(\kappa z+\lambda)^{2}}\partial_{\eta}^{\alpha}\lb D\rb^{\kappa z+\lambda}a(\cdot,\eta)\|_{C^{r-\Real(\kappa z+\lambda)}_{*}(\Rn)}\\
&\leq \|e^{(\kappa z+\lambda)^{2}}\lb D\rb^{\kappa z+\lambda}\|_{\La(C^{r}_{*}(\Rn),C^{r-\Real(\kappa z+\lambda)}_{*}(\Rn))} \|\partial_{\eta}^{\alpha}a(\cdot,\eta)\|_{C^{r}_{*}(\Rn)}\\
&\lesssim \|a\|_{C^{r}_{*}S^{m,l}_{1,\delta}} \lb\eta\rb^{m-|\alpha|+\delta r}\\
&=\|a\|_{C^{r}_{*}S^{m,l}_{1,\delta}}\lb\eta\rb^{m+\delta\Real(\kappa z+\lambda)-|\alpha|+\delta(r-\Real(\kappa z+\lambda))}.
\end{align*}
This suffices for part \eqref{it:symbol2} in Definition \ref{def:rough}, since $|\lb\eta\rb^{-\delta(\kappa z+\lambda)}|=\lb\eta\rb^{-\delta \Real(\kappa z+\lambda)}$.

Next, let $\eta\in\Rn$ be given, and let $k\geq1$ be such that $2^{k-1}\leq \lb\eta\rb\leq 2^{k+1}$. By assumption, for some $N\geq0$ independent of $\eta$ and $k$, one has
\[
a(x,\eta)=\sum_{j=0}^{\infty}\psi_{j}(D)a(\cdot,\eta)(x)=\sum_{j\geq \delta k-N}\psi_{j}(D) a(\cdot,\eta)(x)
\]
for all $x\in\Rn$.
Hence we can write
\begin{align*}
&\|e^{(\kappa z+\lambda)^{2}}\partial_{\eta}^{\alpha}\lb D\rb^{\kappa z+\lambda}a(\cdot,\eta)\|_{L^{\infty}(\Rn)}\\
&\leq \sum_{j\geq  \delta k-N}\|e^{(\kappa z+\lambda)^{2}}\lb D\rb^{\kappa z+\lambda}\psi_{j}(D)\partial_{\eta}^{\alpha}a(\cdot,\eta)\|_{L^{\infty}(\Rn)}.
\end{align*}
Now use \eqref{eq:Zyg3}, \eqref{eq:Zyg2}, and then \eqref{eq:Zyg3} again, to bound this by a multiple of
\begin{align*}
&\sum_{j\geq  \delta k-N}\|e^{(\kappa z+\lambda)^{2}}\lb D\rb^{\kappa z+\lambda}\psi_{j}(D)\partial_{\eta}^{\alpha}a(\cdot,\eta)\|_{C^{0}_{*}(\Rn)}\\
&\lesssim \sum_{j\geq  \delta k-N}\|\psi_{j}(D)\partial_{\eta}^{\alpha}a(\cdot,\eta)\|_{C^{\Real(\kappa z+\lambda)}_{*}(\Rn)}\\
&\eqsim \sum_{j\geq  \delta k-N}2^{j\Real(\kappa z+\lambda)}\|\psi_{j}(D)\partial_{\eta}^{\alpha}a(\cdot,\eta)\|_{L^{\infty}(\Rn)}.
\end{align*}
Finally, straightforward estimates allow one to bound this by a multiple of
\begin{align*}
&\sum_{j\geq \delta k-N}2^{j(\Real(\kappa z+\lambda)-r)}\|\partial_{\eta}^{\alpha}a(\cdot,\eta)\|_{C^{r}_{*}(\Rn)}\\
&\lesssim \|a\|_{C^{r}_{*}S^{m}_{1,\delta}}\sum_{j\geq \delta k-N}2^{-j(r-\Real(\kappa z+\lambda))}\lb\eta\rb^{m-|\alpha|+\delta r}\\
&\lesssim \|a\|_{C^{r}_{*}S^{m}_{1,\delta}}2^{-\delta k(r-\Real(\kappa z+\lambda))}\lb\eta\rb^{m-|\alpha|+\delta r}.
\end{align*}
This concludes the proof, since $2^{k}\eqsim \lb \eta\rb$.
\end{proof}

\subsection{Pseudodifferential operators}

In this subsection we collect some known results about the boundedness of certain classes of pseudodifferential operators. These preliminary results will be used in the proof of our main theorem.

 The following lemma is \cite[Lemma 3.2]{Rozendaal20}, which in turn is a straightforward consequence of \cite[Theorem 6.10]{HaPoRo20}. It will be used to deal with the smooth term in the decomposition of a rough symbol.

\begin{lemma}\label{lem:smoothpseudo}
Let $m\in\R$ and $a\in S^{m}_{1,1/2}$. Then 
\[
a(x,D):\HT^{s+m,p}_{FIO}(\Rn)\to\Hps
\]
for all $p\in[1,\infty]$ and $s\in\R$.
\end{lemma}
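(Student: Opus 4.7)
The plan is to reduce the statement to the known base case $s=m=0$, namely the boundedness of pseudodifferential operators with symbols in $S^{0}_{1,1/2}$ on $\HT^{p}_{FIO}(\Rn)$ for every $p\in[1,\infty]$; this is precisely \cite[Theorem 6.10]{HaPoRo20}, which I will use as a black box.

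Unwinding the definition $\Hps=\lb D\rb^{-s}\Hp$ from Definition \ref{def:HpFIO}, the claim $a(x,D):\HT^{s+m,p}_{FIO}(\Rn)\to\Hps$ is equivalent to the assertion that the conjugated operator
\[
T_{s,m}:=\lb D\rb^{s}\,a(x,D)\,\lb D\rb^{-s-m}
\]
is bounded on $\Hp$. The next step is to identify $T_{s,m}$ as a pseudodifferential operator with symbol in $S^{0}_{1,1/2}$. Since $\lb\xi\rb^{\sigma}\in S^{\sigma}_{1,0}$ for every $\sigma\in\R$, and since the smooth composition calculus yields $S^{m_{1}}_{1,\delta_{1}}\circ S^{m_{2}}_{1,\delta_{2}}\subseteq S^{m_{1}+m_{2}}_{1,\max(\delta_{1},\delta_{2})}$ whenever the second indices are strictly less than $1$, the operator $T_{s,m}$ equals $b(x,D)$ for some $b\in S^{0}_{1,1/2}$, with finitely many $S^{0}_{1,1/2}$-seminorms of $b$ controlled by finitely many $S^{m}_{1,1/2}$-seminorms of $a$. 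Applying the base case on $b(x,D)$ then closes the argument.

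For $p\in[1,\infty)$ everything is clean, as both the operator identity $T_{s,m}=\lb D\rb^{s}a(x,D)\lb D\rb^{-s-m}$ and the desired boundedness can be checked on the dense subspace of Schwartz functions with compact Fourier support in $\Hps$ recorded in Section \ref{sec:Hardy}. The step I expect to need the most care is the endpoint $p=\infty$, where such density fails and the definition of $a(x,D)$ in \eqref{eq:pseudodef} is only immediate on $\Sw(\Rn)$ (cf.\ Remark \ref{rem:dualitydef}). Here I would proceed by duality: the formal adjoint of $b(x,D)$ again lies in $S^{0}_{1,1/2}$ by the smooth symbolic calculus, so the $p=1$ case applied to this adjoint, combined with $\HT^{\infty}_{FIO}(\Rn)=(\HT^{1}_{FIO}(\Rn))^{*}$ and the isometric action of $\lb D\rb^{\sigma}$ between Sobolev scales, transfers the bound to $\HT^{s,\infty}_{FIO}(\Rn)$.
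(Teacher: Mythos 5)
Your argument is correct and is essentially the reduction the paper has in mind: the paper offers no proof beyond citing \cite[Lemma 3.2]{Rozendaal20} as a ``straightforward consequence'' of \cite[Theorem 6.10]{HaPoRo20}, and conjugating by $\lb D\rb^{s}$ together with the smooth composition calculus (only the left composition $\lb D\rb^{s}\circ a(x,D)$ is nontrivial, and it applies since $\delta=1/2<1$), plus the adjoint calculus and $\HT^{\infty}_{FIO}(\Rn)=(\HT^{1}_{FIO}(\Rn))^{*}$ for the endpoint $p=\infty$, is exactly that consequence. Nothing to add.
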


Next, we state a lemma about rough pseudodifferential operators acting on the classical function spaces $\HT^{s,p}(\Rn)$ from \eqref{eq:classical}. It is an extension of a result from \cite{Bourdaud82}. Without the additional parameter $l$ and the norm bounds in \eqref{it:pseudoLp1}, it is \cite[Lemma 3.1]{Rozendaal20}. The fact that one may add this parameter and obtain norm bounds follows either from abstract reasoning, or by keeping track of the constants in the proof of the relevant statement in \cite{Bourdaud82}.

\begin{lemma}\label{lem:pseudoLp}
Let $r>0$, $m\in\R$, $\delta\in[0,1)$ and $p\in[1,\infty]$. Then the following statements hold.
\begin{enumerate} 
\item\label{it:pseudoLp1} For each $-(1-\delta)r<s<r$, there exist an $l\in\N$ and an $M\geq0$ such that for each $a\in C^{r}_{*}S^{m,l}_{1,\delta}$ one has 
\begin{equation}\label{eq:pseudoLp}
a(x,D):\HT^{s+m,p}(\Rn)\to\HT^{s,p}(\Rn),
\end{equation}
with $\|a(x,D)\|_{\La(\HT^{s+m,p}(\Rn),\HT^{s,p}(\Rn))}\leq M\|a\|_{C^{r}_{*}S^{m,l}_{1,\delta}}$.
\item\label{it:pseudoLp2} For each $a\in \Hrinf S^{m}_{1,\delta}$, \eqref{eq:pseudoLp} also holds for $s=r$.
\item\label{it:pseudoLp3} For each $a\in\Hrinf S^{-\delta r}_{1,\delta}$ of the form $a=b^{\flat}_{\delta}$ for some $b\in\Hrinf (\Rn)$, \eqref{eq:pseudoLp} holds for all $-(1-\delta)r\leq s\leq r$, with $m=-\delta r$.
\end{enumerate}
\end{lemma}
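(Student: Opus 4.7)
The plan is to follow the \emph{abstract reasoning} route alluded to just before the lemma. Parts (2) and (3) are precisely \cite[Lemma 3.1]{Rozendaal20} (stated without an explicit $l$), and for part (1) the bare boundedness of $a(x,D):\HT^{s+m,p}(\Rn)\to\HT^{s,p}(\Rn)$ for each individual $a\in C^{r}_{*}S^{m}_{1,\delta}$ is also recorded there. What remains is to produce one fixed $l\in\N$ and a uniform control of the operator norm in terms of $\|a\|_{C^{r}_{*}S^{m,l}_{1,\delta}}$.

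First I would fix $s\in(-(1-\delta)r,r)$ and view the assignment
\[
\Phi:C^{r}_{*}S^{m}_{1,\delta}\to \La(\HT^{s+m,p}(\Rn),\HT^{s,p}(\Rn)),\qquad a\mapsto a(x,D),
\]
as a linear map from a Fr\'echet space (with defining seminorms $\|\cdot\|_{C^{r}_{*}S^{m,l}_{1,\delta}}$, $l\in\N$) into a Banach space. To apply the closed graph theorem, I would verify that the graph is closed: if $a_{n}\to a$ in $C^{r}_{*}S^{m}_{1,\delta}$ and $\Phi(a_{n})\to T$ in operator norm, then using \eqref{eq:pseudodef}, the symbol bound $\sup_{x\in\Rn}|a_n(x,\eta)-a(x,\eta)|\leq \|a_n-a\|_{C^{r}_{*}S^{m,1}_{1,\delta}}\lb\eta\rb^{m}$, and the Schwartz decay of $\wh{f}$, dominated convergence yields $a_{n}(x,D)f\to a(x,D)f$ pointwise for every $f\in\Sw(\Rn)$. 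Since $\Phi(a_n)f\to Tf$ in $\HT^{s,p}(\Rn)\hookrightarrow \Sw'(\Rn)$, uniqueness of limits in $\Sw'(\Rn)$ forces $T=\Phi(a)$. Thus $\Phi$ is continuous, so by the definition of the projective-limit topology there exist $l\in\N$ and $M\geq 0$ such that
\[
\|a(x,D)\|_{\La(\HT^{s+m,p}(\Rn),\HT^{s,p}(\Rn))}\leq M\|a\|_{C^{r}_{*}S^{m,l}_{1,\delta}}
\]
for all $a\in C^{r}_{*}S^{m}_{1,\delta}$.

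Finally I would extend the bound to an arbitrary $a\in C^{r}_{*}S^{m,l}_{1,\delta}$ by truncating in $\eta$: setting $a^{\veps}(x,\eta):=\chi(\veps\eta)a(x,\eta)$ with $\chi\in\Ccinf(\Rn)$ equal to $1$ near the origin gives $a^{\veps}\in C^{r}_{*}S^{m}_{1,\delta}$ with $\|a^{\veps}\|_{C^{r}_{*}S^{m,l}_{1,\delta}}$ controlled uniformly in $\veps>0$ by a constant multiple of $\|a\|_{C^{r}_{*}S^{m,l}_{1,\delta}}$. Applying the bound above to $a^{\veps}$ and sending $\veps\downarrow 0$ (via $a^{\veps}(x,D)f\to a(x,D)f$ on the dense subspace of Schwartz functions with compactly supported Fourier transforms) gives the stated estimate for $p<\infty$. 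The main obstacle is the $p=\infty$ case, where the Schwartz functions are not dense; there I would either obtain the bound by duality from the $p=1$ case (noting that the adjoint of $a(x,D)$ has a rough symbol in the same class up to harmless lower order error, cf.~\cite[Remark 2.6]{Rozendaal20}), or alternatively trace through Bourdaud's paraproduct proof in \cite{Bourdaud82} and read off the constant directly, which is the other route explicitly allowed by the statement.
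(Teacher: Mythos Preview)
Your proposal is precisely the ``abstract reasoning'' route the paper indicates; the paper itself gives no further details, naming only the alternative of tracking the constants in \cite{Bourdaud82}. For $p<\infty$ the closed graph argument and the $\eta$-truncation step are sound and deliver exactly the claimed $l$ and $M$.

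The weak point is $p=\infty$, and it bites earlier than you flag. In the closed graph step you conclude $T=\Phi(a)$ from $Tf=\Phi(a)f$ for $f\in\Sw(\Rn)$, but for $p=\infty$ the Schwartz functions are not dense in $\HT^{s+m,\infty}(\Rn)$, so equality on $\Sw(\Rn)$ does not force equality of the bounded operators; the same density issue that you identify in the truncation step already obstructs the closed graph argument itself. Your duality patch does not repair this: duality from the $p=1$ bound controls the \emph{adjoint} $a(x,D)^{*}:\HT^{-s,\infty}(\Rn)\to\HT^{-s-m,\infty}(\Rn)$, not $a(x,D)$ on $\HT^{s+m,\infty}(\Rn)$, and the assertion that $a(x,D)^{*}$ is again of the form $b(x,D)$ with $b$ in the same rough class up to lower order is exactly the kind of adjoint calculus for rough symbols that fails without smoothness in $x$ (the paper itself declines to enter these subtleties in Remark~\ref{rem:dualitydef}). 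Your second fallback---reading off the constants directly in Bourdaud's paraproduct proof---is the correct route for $p=\infty$, and it is the other option the paper explicitly names.
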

We do not make any claims in \eqref{it:pseudoLp2} and \eqref{it:pseudoLp3} regarding norm bounds for $a(x,D)$ for the extremal values of $s$. Although these norm bounds are as one would expect, namely in terms of $\|a\|_{\Hrinf S^{m,l}_{1,\delta}}$ for sufficiently large $l$, we will not need them in the remainder. We also do not make any such claims in Proposition \ref{prop:pseudoloss} below.

As a consequence of Lemma \ref{lem:pseudoLp}, we directly obtain the following proposition, a version of \cite[Proposition 3.3]{Rozendaal20} which involves an additional parameter $l$ and norm bounds for the operators. Recall the definition of $s(p)$ from \eqref{eq:sp}.

\begin{proposition}\label{prop:pseudoloss}
Let $r>0$, $m\in\R$, $\delta\in[0,1)$ and $p\in[1,\infty]$. Then the following statements hold.
\begin{enumerate}
\item\label{it:pseudoloss1} For each $-(1-\delta)r-s(p)<s<r-s(p)$, there exist an $l\in\N$ and an $M\geq0$ such that for each $a\in C^{r}_{*}S^{m,l}_{1,\delta}$ one has
\begin{equation}\label{eq:pseudoloss}
a(x,D):\HT^{s+2s(p)+m,p}_{FIO}(\Rn)\to\Hps,
\end{equation}
with $\|a(x,D)\|_{\La(\HT^{s+2s(p)+m,p}_{FIO}(\Rn),\Hps)}\leq M\|a\|_{C^{r}_{*}S^{m,l}_{1,\delta}}$.
\item\label{it:pseudoloss2} For each $a\in \Hrinf S^{m}_{1,\delta}$, \eqref{eq:pseudoloss} also holds for $s=r-s(p)$. 
\item\label{it:pseudoloss3} For each $a\in \Hrinf S^{-\delta r}_{1,\delta}$ of the form $a=b^{\flat}_{\delta}$ for some $b\in\Hrinf (\Rn)$, \eqref{eq:pseudoloss} holds for all $-(1-\delta)r-s(p)\leq s\leq r-s(p)$, with $m=-\delta r$.
\end{enumerate}
\end{proposition}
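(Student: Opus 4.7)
The plan is to deduce this proposition directly from Lemma \ref{lem:pseudoLp} by sandwiching the Hardy space for FIOs between two classical function spaces via the Sobolev embeddings \eqref{eq:sobolev}. Concretely, for any $p\in[1,\infty]$ and $\sigma\in\R$ one has the contractive inclusions
\[
\HT^{\sigma+s(p),p}(\Rn)\subseteq \HT^{\sigma,p}_{FIO}(\Rn)\subseteq \HT^{\sigma-s(p),p}(\Rn),
\]
so the natural strategy is to ``enter'' the Hardy space for FIOs on the domain side through the lower embedding, apply the classical mapping property of rough pseudodifferential operators on $\HT^{\sigma,p}(\Rn)$, and then ``leave'' through the upper embedding on the target side.

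More precisely, given $s$ as in \eqref{it:pseudoloss1} and $f\in\HT^{s+2s(p)+m,p}_{FIO}(\Rn)$, the second embedding with $\sigma=s+2s(p)+m$ yields
\[
\|f\|_{\HT^{s+s(p)+m,p}(\Rn)}\lesssim \|f\|_{\HT^{s+2s(p)+m,p}_{FIO}(\Rn)}.
\]
The assumption $-(1-\delta)r-s(p)<s<r-s(p)$ is equivalent to $-(1-\delta)r<s+s(p)<r$, so I can apply Lemma \ref{lem:pseudoLp}\eqref{it:pseudoLp1} with Sobolev index $s+s(p)$ in place of $s$ to obtain an $l\in\N$ and $M\geq 0$, depending only on $r,m,\delta,p,s$, such that
\[
\|a(x,D)f\|_{\HT^{s+s(p),p}(\Rn)}\leq M\|a\|_{C^{r}_{*}S^{m,l}_{1,\delta}}\|f\|_{\HT^{s+s(p)+m,p}(\Rn)}.
\]
Finally, the first Sobolev embedding with $\sigma=s$ gives $\|a(x,D)f\|_{\Hps}\lesssim \|a(x,D)f\|_{\HT^{s+s(p),p}(\Rn)}$, and chaining these three estimates produces \eqref{eq:pseudoloss} with the claimed operator-norm bound.

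For parts \eqref{it:pseudoloss2} and \eqref{it:pseudoloss3} the same reduction is used, the only difference being which statement of Lemma \ref{lem:pseudoLp} one invokes. If $a\in \Hrinf S^{m}_{1,\delta}$, then Lemma \ref{lem:pseudoLp}\eqref{it:pseudoLp2} allows the extremal value $s+s(p)=r$, which corresponds to $s=r-s(p)$ in our formulation; if in addition $a=b^{\flat}_{\delta}$ with $b\in\Hrinf(\Rn)$ and $m=-\delta r$, then Lemma \ref{lem:pseudoLp}\eqref{it:pseudoLp3} covers the full closed range $-(1-\delta)r\leq s+s(p)\leq r$, i.e.~$-(1-\delta)r-s(p)\leq s\leq r-s(p)$. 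No serious obstacle is expected; the argument is a routine composition of Sobolev embeddings with a known classical mapping property, and essentially no work beyond keeping track of the shift $s(p)$ at each of the two embedding steps is required.
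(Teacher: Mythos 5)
Your argument is exactly the paper's proof: embed $\HT^{s+2s(p)+m,p}_{FIO}(\Rn)\subseteq \HT^{s+s(p)+m,p}(\Rn)$, apply Lemma \ref{lem:pseudoLp} at Sobolev index $s+s(p)$ (noting that the hypothesis on $s$ is precisely $-(1-\delta)r<s+s(p)<r$), and then embed $\HT^{s+s(p),p}(\Rn)\subseteq\Hps$, with the endpoint cases handled by parts \eqref{it:pseudoLp2} and \eqref{it:pseudoLp3} of that lemma. The proposal is correct and matches the paper's approach.
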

\begin{proof}
Combine Lemma \ref{lem:pseudoLp} with the Sobolev embeddings for $\Hp$ from \eqref{eq:sobolev}:
\[
a(x,D):\HT^{s+2s(p)+m,p}_{FIO}(\Rn)\subseteq \HT^{s+s(p)+m,p}(\Rn)\to\HT^{s+s(p),p}(\Rn)\subseteq\Hps.\qedhere
\]
\end{proof}

\begin{remark}\label{rem:flatflat}
We will in fact not use the final statement in Proposition \ref{prop:pseudoloss} directly in what follows. Instead, in the proof of Theorem \ref{thm:main} we will use that \eqref{eq:pseudoloss} holds for all $-(1-\delta)r-s(p)\leq s\leq r-s(p)$, with $m=-\delta r$, if $a=((b^{\flat}_{\delta'})^{\flat}_{\delta'})^{\flat}_{\delta}$ for some $b\in\Hrinf(\Rn)$ and $\delta'\in[0,\delta]$. This was already noted in \cite[Remark 3.4]{Rozendaal20}, and to prove it one uses a straightforward modification of the proof of the final statement in Lemma \ref{lem:pseudoLp}.
\end{remark}

Our next proposition concerns the main result of \cite{Rozendaal20}. 

\begin{proposition}\label{prop:critical}
Let $r>0$, $m\in\R$, $p\in(1,\infty)$ and $s\in\R$. For $\veps>0$, set
\[
\tau:=\begin{cases}
0&\text{if }r>n-1,\\
\veps&\text{if }r=n-1,\\
2s(p)\big(1-\frac{r}{n-1}\big)&\text{if }r<n-1,
\end{cases}
\]
and
\[
\gamma:=\begin{cases}
\frac{1}{2}+\frac{2s(p)}{r}&\text{if }r\geq n-1,\\
\frac{1}{2}+\frac{2s(p)}{n-1}&\text{if }r<n-1.
\end{cases}
\]
Then, for each $c>0$, there exist $l\in\N$ and $M\geq0$ such that the following holds. For each $a\in C^{r}_{*}S^{m,l}_{1,1/2}$ such that
\begin{equation}\label{eq:support}
\supp(\F a(\cdot,\eta))\subseteq\{\xi\in\Rn\mid c|\eta|^{1/2}\leq |\xi|\leq  \tfrac{1}{16}(1+|\eta|)^{\gamma}\}
\end{equation}
for all $\eta\in\Rn$, one has $a(x,D):\HT^{s+m+\tau,p}_{FIO}(\Rn)\to \Hps$, with
\[
\|a(x,D)\|_{\La(\HT^{s+m+\tau,p}_{FIO}(\Rn),\Hps)}\leq M\|a\|_{C^{r}_{*}S^{m,l}_{1,1/2}}.
\]
\end{proposition}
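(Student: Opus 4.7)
The qualitative statement that $a(x,D):\HT^{s+m+\tau,p}_{FIO}(\Rn)\to \Hps$ is already established as the main theorem of \cite{Rozendaal20}. The genuinely new content here is quantitative: exhibiting a single $l\in\N$ (depending only on $n,r,m,p,s,c,\veps$) such that the operator norm is \emph{linearly} controlled by $\|a\|_{C^{r}_{*}S^{m,l}_{1,1/2}}$. The plan is therefore not to redo the argument from scratch, but to re-examine each of its three main steps and check that only finitely many $\eta$-derivatives of $a$, each with only its $C^{r}_{*}$-norm in the $x$-variable, enter into the bounds, and that all constants depend linearly on them.

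The first step I would carry out is the Coifman--Meyer-type symbol reduction (\cite{Coifman-Meyer78}) already used in \cite{Rozendaal20}: decompose $a(x,\eta)$ as an absolutely summable series $\sum_{\nu} c_{\nu} b_{\nu}(x) m_{\nu}(\eta)$ of elementary tensors, with $b_{\nu}\in C^{r}_{*}(\Rn)$ and $m_{\nu}$ satisfying $S^{m}_{1,1/2}$-type estimates, such that $\sum_{\nu}|c_{\nu}|$ is controlled linearly by $\|a\|_{C^{r}_{*}S^{m,l_{1}}_{1,1/2}}$ for some $l_{1}$ depending only on $n$. This uses a fixed, finite number of integrations by parts in $\eta$, hence contributes a fixed number of derivatives to $l$.

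Next, for each elementary symbol I would run the dyadic-parabolic (second dyadic) paraproduct decomposition from \cite{Rozendaal20}, grouping the Littlewood--Paley pieces of $b_{\nu}(x)$ against the frequency support of $m_{\nu}(\eta)$. The support restriction \eqref{eq:support} is tailored precisely to the ``middle-frequency'' regime, ensuring that only the diagonal and one off-diagonal interaction remain and that the spatial Littlewood--Paley pieces land in a definite parabolic tile per frequency shell. After this step, one estimates away the parabolic localization $\ph_{\w}(D)$ appearing in the $\Hp$-norm \eqref{eq:Hpintro} via an anisotropic Mikhlin multiplier theorem, and then applies dyadic Littlewood--Paley theory on $L^{p}(\Rn)$. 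The Mikhlin step absorbs a number of $\eta$-derivatives of $m_{\nu}$ that depends only on $n$; taking $l$ larger than all these fixed thresholds, and noting that each estimate is linear in the relevant seminorm, yields the claimed bound $M\|a\|_{C^{r}_{*}S^{m,l}_{1,1/2}}$. The loss $\tau$ and the exponent $\gamma$ enter respectively from the failure of Littlewood--Paley theory when $r\leq n-1$ and from the upper bound on $|\xi|$ in \eqref{eq:support} that controls how far into the high-frequency regime the pieces reach.

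The main obstacle is purely bookkeeping: verifying that in the Coifman--Meyer reduction, in the paraproduct summation, and in the anisotropic Mikhlin step, one can work uniformly with a fixed finite number $l$ of $\eta$-derivatives and that every constant in the chain is linear in $\|a\|_{C^{r}_{*}S^{m,l}_{1,1/2}}$. As remarked in Lemma \ref{lem:pseudoLp}, such linear control is automatic from an abstract closed-graph/absolute-continuity argument once one knows that the target space of the unbounded map coincides with the continuous image, but for a direct proof it is cleaner to redo the counting. No new analytic ingredient is required beyond what is already in \cite{Rozendaal20}; the improvement is purely a matter of making the implicit constants explicit.
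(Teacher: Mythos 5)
Your proposal is correct and follows essentially the same route as the paper: the paper's own proof simply observes that this proposition (without $l$ and the norm bounds) is the heart of the proof of \cite[Theorem 4.1]{Rozendaal20}, and that the quantitative refinement follows by tracking constants through that argument, as noted in \cite[Remark 4.2]{Rozendaal20}. Your reconstruction of the three steps (Coifman--Meyer reduction, dyadic-parabolic paraproduct, anisotropic Mikhlin) and the bookkeeping claim matches exactly what is being cited.
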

\begin{proof}
Without the parameter $l$ and the norm bounds, the statement is in fact the heart of the proof of \cite[Theorem 4.1]{Rozendaal20}. Indeed, the first step of the proof of that theorem consists of reducing matters to this proposition. It also follows from the proof that one can add the parameter $l$ and obtain norm bounds, as is noted in \cite[Remark 4.2]{Rozendaal20}.
\end{proof}

\section{Main result}\label{sec:main}

We are now ready to state and prove our main result. In the same manner as in Propositions \ref{prop:pseudoloss} and \ref{prop:critical}, one can add a parameter $l$ and obtain norm bounds for the associated operators, but for simplicity we do not include this additional information in the statement.

\begin{theorem}\label{thm:main}
Let $r>0$, $m\in\R$, $p\in(1,\infty)$ and $a\in C^{r}_{*}S^{m}_{1,1/2}$. For $\veps\in(0,r/2]$, set
\[
\sigma:=\begin{cases}
0&\text{if }2s(p)<r/2,\\
2s(p)-r/2+\veps&\text{if }2s(p)\geq r/2.
\end{cases}
\]
Then 
\begin{equation}\label{eq:main}
a(x,D):\HT^{s+m+\sigma,p}_{FIO}(\Rn)\to \Hps
\end{equation}
for $-r/2+s(p)-\sigma<s<r-s(p)$.
If $a\in\Hrinf S^{m}_{1,1/2}$, then \eqref{eq:main} also holds for $s=r-s(p)$. If $a=b^{\flat}_{1/2}$ for some $b\in \Hrinf (\Rn)$, then \eqref{eq:main} holds for all $-r/2+s(p)-\sigma\leq s\leq r-s(p)$, with $m=-r/2$.
\end{theorem}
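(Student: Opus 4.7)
The plan is to combine the symbol smoothing of Lemma \ref{lem:smoothing} with a further dyadic cut-off in the spatial Fourier variable and the complex interpolation machinery of Propositions \ref{prop:inter}, \ref{prop:critical}, and \ref{prop:interan}, following the strategy outlined in the introduction. First I reduce to $m=0$ by composing on the right with $\lb D\rb^{-m}$, and apply Lemma \ref{lem:smoothing} at level $\beta=1/2$ to write $a=a^{\sharp}_{1/2}+a^{\flat}_{1/2}$ with $a^{\sharp}_{1/2}\in S^{0}_{1,1/2}$ and $a^{\flat}_{1/2}\in C^{r}_{*}S^{-r/2}_{1,1/2}$. Lemma \ref{lem:smoothpseudo} handles $a^{\sharp}_{1/2}(x,D):\Hps\to\Hps$ for every $p$ and $s$, and the inclusion $\HT^{s+\sigma,p}_{FIO}(\Rn)\subseteq\Hps$ absorbs the claimed loss of $\sigma$ derivatives.

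Next I split the rough remainder as $a^{\flat}_{1/2}=a^{M}+a^{H}$, where the \emph{middle} part $a^{M}$ has its $x$-Fourier support inside $c|\eta|^{1/2}\le|\xi|\le\tfrac{1}{16}(1+|\eta|)^{\gamma}$ for some $\gamma\in(1/2,1)$, while the \emph{high} part $a^{H}$ has $x$-Fourier support in $|\xi|\ge\tfrac{1}{16}(1+|\eta|)^{\gamma}$; this is performed by a further symbol smoothing at level $\gamma$. The embedding \eqref{eq:symbolinc} places $a^{H}$ in $C^{r}_{*}S^{(\gamma-1)r}_{1,\gamma}$, so Proposition \ref{prop:pseudoloss} with $\delta=\gamma$ gives $a^{H}(x,D):\HT^{s+2s(p)+(\gamma-1)r,p}_{FIO}(\Rn)\to\Hps$ on the interval $-(1-\gamma)r-s(p)<s<r-s(p)$. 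Choosing $\gamma$ slightly above $1/2$ (with exact value depending on whether $2s(p)<r/2$ or $2s(p)\ge r/2$, and on $\veps$) arranges both that $2s(p)+(\gamma-1)r\le\sigma$ and that the lower endpoint of this range lies at or below $-r/2+s(p)-\sigma$, reproducing the Sobolev interval announced in the theorem.

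The middle piece $a^{M}$ is the heart of the argument: it has only Zygmund regularity $r$, so a direct appeal to Proposition \ref{prop:critical} would cost $\tau>0$ derivatives whenever $r<n-1$, which we cannot afford. To avoid this loss I feed $a^{M}$ into the complex interpolation framework. Proposition \ref{prop:inter} produces an analytic family $\{a^{M}_{z}\}_{z\in\overline{S}}$ with uniform-in-$\Imag z$ symbol estimates and with the support condition of Proposition \ref{prop:critical} preserved, and one selects $\kappa,\lambda\in\R$ with $\kappa+\lambda<r$ so that $a^{M}_{it}\in C^{n-1+\delta_{0}}_{*}S^{0}_{1,1/2}$ and $a^{M}_{1+it}\in C^{\delta_{0}}_{*}S^{0}_{1,1/2}$ uniformly in $t\in\R$, for some small $\delta_{0}>0$, while $a^{M}_{\theta}=a^{M}$ at the target $\theta$ determined by $\tfrac{1}{p}=\tfrac{1-\theta}{p_{0}}+\tfrac{\theta}{2}$ for an endpoint exponent $p_{0}$. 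Proposition \ref{prop:critical} then gives $a^{M}_{it}(x,D):\HT^{s_{0},p_{0}}_{FIO}(\Rn)\to\HT^{s_{0},p_{0}}_{FIO}(\Rn)$ with $\tau=0$ for every $s_{0}\in\R$, Lemma \ref{lem:pseudoLp} provides the corresponding $L^{2}$-boundedness of $a^{M}_{1+it}(x,D)$, and Proposition \ref{prop:interan} interpolates these endpoint estimates to yield $a^{M}(x,D):\Hps\to\Hps$ for every $s$ in the theorem's range.

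The main obstacle is coordinating all these parameters simultaneously: the high-frequency cutoff $\gamma$, the interpolation parameters $\kappa,\lambda,p_{0},\theta$, and the auxiliaries $\delta_{0},\veps$ must be chosen so that the three pieces fit together, and it is the requirement $\theta\in[0,1]$ together with the lower bound $p_{0}>1$ that forces the condition $4s(p)<r$ in the no-loss regime $\sigma=0$; in the regime $2s(p)\ge r/2$ one allows $\tau>0$ at the endpoint $p_{0}$, and this is precisely what the quantity $\veps$ in the definition of $\sigma$ absorbs. The endpoint claims for $a\in\Hrinf S^{m}_{1,1/2}$ and for $a=b^{\flat}_{1/2}$ are then obtained by substituting parts \eqref{it:pseudoloss2} and \eqref{it:pseudoloss3} of Proposition \ref{prop:pseudoloss} for part \eqref{it:pseudoloss1} in the analysis of $a^{H}$, invoking Remark \ref{rem:flatflat} in the second case so that part \eqref{it:pseudoloss3} remains applicable after the extra round of symbol smoothing.
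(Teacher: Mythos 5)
Your proposal follows essentially the same route as the paper: reduce to $m=0$, peel off the smooth part $a^{\sharp}_{1/2}$ via Lemma \ref{lem:smoothpseudo}, remove the high spatial frequencies by a second round of symbol smoothing and treat them with Proposition \ref{prop:pseudoloss} and the Sobolev embeddings, and run the analytic family $a_{z}$ of Proposition \ref{prop:inter} through Proposition \ref{prop:interan}, with Proposition \ref{prop:critical} at an endpoint exponent near $1$ (or $\infty$) and Lemma \ref{lem:pseudoLp} at $L^{2}$, for the remaining middle piece. The endpoint statements via parts \eqref{it:pseudoloss2} and \eqref{it:pseudoloss3} of Proposition \ref{prop:pseudoloss} and Remark \ref{rem:flatflat} are also handled as in the paper.

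There is, however, a concrete bookkeeping error in your treatment of the high-frequency piece. Lemma \ref{lem:smoothing} with $\beta=\delta=1/2$ gives $a^{\flat}_{1/2}\in C^{r}_{*}S^{0}_{1,1/2}$, \emph{not} $C^{r}_{*}S^{-r/2}_{1,1/2}$: the order improves only by $(\beta-\delta)r$, which vanishes when $\beta=\delta$ (the gain you have in mind occurs for multiplication operators, i.e.\ for $b^{\flat}_{1/2}$ with $b\in C^{r}_{*}(\Rn)\subseteq C^{r}_{*}S^{0}_{1,0}$, which is exactly the situation of the theorem's final assertion). Consequently your high part lies in $C^{r}_{*}S^{-(\gamma-1/2)r}_{1,\gamma}$ rather than $C^{r}_{*}S^{(\gamma-1)r}_{1,\gamma}$, and the two requirements you impose --- that the order loss $2s(p)-(\gamma-\tfrac12)r$ be at most $\sigma$, and that the lower end $-(1-\gamma)r-s(p)$ of the admissible Sobolev range be at most $-r/2+s(p)-\sigma$ --- become \emph{opposing} inequalities that force the unique choice $\gamma=\tfrac12+\tfrac{2s(p)-\sigma}{r}$, which is precisely the paper's $\beta$; there is no latitude in this parameter. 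With that correction the scheme closes exactly as in the paper (and one should also check that this cutoff exponent does not exceed the $\gamma$ of Proposition \ref{prop:critical} at the chosen endpoint, which holds for the endpoint exponent sufficiently close to $1$). Finally, note that applying Proposition \ref{prop:interan} requires verifying the continuity and holomorphy of $z\mapsto a_{z}(x,D)f$ on a dense class together with the uniform $L^{2}$ bound on $\overline{S}$; this is a genuine, if routine, portion of the argument that your outline leaves implicit.
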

\begin{proof}
Firstly, by replacing $a(x,D)$ by $a(x,D)\lb D\rb^{-m}$, we may assume that $m=0$, for notational simplicity. For the final statement this requires replacing $a(x,D)$ by $b^{\flat}_{1/2}(x,D)\lb D\rb^{r/2}$.

\subsubsection{Symbol smoothing}

Next, we use the symbol smoothing procedure to remove some of the frequencies of $a$. More precisely, for 
\[
\beta:=\frac{1}{2}+\frac{2s(p)-\sigma}{r},
\]
we claim that it suffices to prove the following statement. If $a\in C^{r}_{*}S^{0}_{1,1/2}$ has the additional property that, for some $c>0$ and all $\xi,\eta\in\Rn$, one has
\begin{equation}\label{eq:supportcondition1}
\supp(\F a(\cdot,\eta))\subseteq\{\xi\in\Rn\mid c|\eta|^{1/2}\leq |\xi|\leq  \tfrac{1}{16}(1+|\eta|)^{\beta}\},
\end{equation}
then \eqref{eq:main} holds for all $s\in\R$.

To prove this claim, let a general $a\in C^{r}_{*}S^{0}_{1,1/2}$ be given. Note that $1/2\leq \beta<1$. We apply the symbol smoothing procedure twice, to write
\begin{equation}\label{eq:doubledec}
a=a_{1/2}^{\sharp}+a^{\flat}_{1/2}=a_{1/2}^{\sharp}+(a^{\flat}_{1/2})^{\sharp}_{\beta}+(a^{\flat}_{1/2})^{\flat}_{\beta}.
\end{equation}
By Lemma \ref{lem:smoothing}, one has $a^{\sharp}_{1/2}\in S^{0}_{1,1/2}$ and $(a^{\flat}_{1/2})^{\flat}_{\beta}\in C^{r}_{*}S^{-(\beta-1/2)r}_{1,\beta}$, with $(a^{\flat}_{1/2})^{\flat}_{\beta}\in \Hrinf S^{-(\beta-1/2)r}_{1,\beta}$ if $a\in \Hrinf S^{0}_{1,1/2}$. Hence Lemma \ref{lem:smoothpseudo} yields 
\begin{equation}\label{eq:sharp}
a^{\sharp}_{1/2}(x,D):\HT^{s+\sigma,p}_{FIO}(\Rn)\subseteq \Hps\to \Hps
\end{equation}
for all $s\in\R$. Also, Proposition \ref{prop:pseudoloss} yields
\begin{equation}\label{eq:doubleflat}
(a^{\flat}_{1/2})^{\flat}_{\beta}(x,D):\HT^{s+\sigma,p}_{FIO}(\Rn)=\HT^{s+2s(p)-(\beta-1/2)r,p}_{FIO}(\Rn)\to\Hps
\end{equation}
for all 
\[
-\frac{r}{2}+s(p)-\sigma=-(1-\beta)r-s(p)<s<r-s(p),
\] 
and also for $s=r-s(p)$ if $a\in \Hrinf S^{0}_{1,1/2}$. Finally, if $a(x,D)=b_{1/2}^{\flat}(x,D)\lb D\rb^{r/2}$ for some $b\in \Hrinf (\Rn)$, then Remark \ref{rem:flatflat} shows that \eqref{eq:doubleflat} also holds for $s=-r/2+s(p)-\sigma$. 

By combining \eqref{eq:doubledec}, \eqref{eq:sharp} and \eqref{eq:doubleflat}, we see that it suffices to show in the rest of the proof that 
\[
(a^{\flat}_{1/2})^{\sharp}_{\beta}(x,D):\HT^{s+\sigma,p}_{FIO}(\Rn)\to \Hps
\]
for all $s\in\R$. Now, by Lemma \ref{lem:smoothing} and \eqref{eq:symbolinc}, one has $a^{\flat}_{1/2}\in C^{r}_{*}S^{0}_{1,1/2}$ and $(a^{\flat}_{1/2})^{\flat}_{\beta}\in C^{r}_{*}S^{-(\beta-1/2)r}_{1,\beta}\subseteq C^{r}_{*}S^{0}_{1,1/2}$, so $(a^{\flat}_{1/2})^{\sharp}_{\beta}\in C^{r}_{*}S^{0}_{1,1/2}$ as well. Moreover, a straightforward computation shows that, for $\ph$ with sufficiently small support (independent of $a$), $(a^{\flat}_{1/2})^{\sharp}_{\beta}$ has the property in \eqref{eq:supportcondition1}. 

This proves the claim, and the remainder of the proof will be dedicated to proving \eqref{eq:main} for $s\in\R$ and $a\in C^{r}_{*}S^{0}_{1,1/2}$ satisfying \eqref{eq:supportcondition1}. To do so we will use interpolation of analytic families of operators, combined with Propositions \ref{prop:pseudoloss} and \ref{prop:critical}.

\subsubsection{Interpolation setup}

We prepare for the interpolation procedure by introducing some parameters. We may suppose that $p\neq 2$, since otherwise Proposition \ref{prop:pseudoloss} directly yields the required statement. In fact, we will assume that $p\in(1,2)$. The case where $p\in(2,\infty)$ is dealt with in an analogous\footnote{Note that we cannot rely directly on duality here, since the operators $a(x,D)$ do not behave well under taking adjoints, especially for rough symbols $a$.} manner. 

Let $\delta\in(0,r)$ be such that $1+\delta<p$, set $r_{1}:=\delta$, and let $\theta\in(0,1)$, $r_{0}>0$ and $t\in\R$ be such that 
\[
\frac{1}{p}=\frac{1-\theta}{1+\delta}+\frac{\theta}{2},\quad\quad r=(1-\theta)r_{0}+\theta r_{1},\quad\text{and}\quad s=(1-\theta)t.
\]
Then 
\[
1-\theta=\frac{\frac{1}{p}-\frac{1}{2}}{\frac{1}{1+\delta}-\frac{1}{2}}=\frac{2s(p)}{(n-1)(\frac{1}{1+\delta}-\frac{1}{2})}=\frac{2s(p)}{2s(1+\delta)}
\]
and
\[
r_{0}=\frac{r-\theta\delta}{1-\theta}=(n-1)(r-\theta\delta)\frac{\frac{1}{1+\delta}-\frac{1}{2}}{2s(p)}=(r-\theta\delta)\frac{2s(1+\delta)}{2s(p)}.
\]
Next, let $\tau$ and $\gamma$ be as in Proposition \ref{prop:critical} with $p$ replaced by $1+\delta$ and $r$ replaced by $r_{0}$. By the choice of parameters, we then have
\begin{equation}\label{eq:tau}
\begin{aligned}
(1-\theta)\tau&=\begin{cases}
0&\text{if }r_{0}>n-1,\\
(1-\theta)\veps&\text{if }r_{0}=n-1,\\
(1-\theta)2s(1+\delta)\big(1-\frac{r_{0}}{n-1}\big)&\text{if }r_{0}>n-1,
\end{cases}\\
&=\begin{cases}
0&\ \ \text{if }(\frac{1}{1+\delta}-\frac{1}{2})^{-1}2s(p)<r-\theta\delta,\\
(1-\theta)\veps&\ \ \text{if }(\frac{1}{1+\delta}-\frac{1}{2})^{-1}2s(p)=r-\theta\delta,\\
2s(p)-(r-\theta\delta)(\frac{1}{1+\delta}-\frac{1}{2})&\ \ \text{if }(\frac{1}{1+\delta}-\frac{1}{2})^{-1}2s(p)>r-\theta\delta,
\end{cases}
\end{aligned}
\end{equation}
and
\begin{align*}
\gamma&=\begin{cases}
\frac{1}{2}+\frac{2s(1+\delta)}{r_{0}}&\text{if }r_{0}\geq n-1,\\
\frac{1}{2}+\frac{2s(1+\delta)}{n-1}&\text{if }r_{0}<n-1,
\end{cases}\\
&=\begin{cases}
\frac{1}{2}+\frac{2s(p)}{r-\theta\delta}&\quad\text{if }r_{0}\geq n-1,\\
\frac{1}{1+\delta}&\quad\text{if }r_{0}<n-1.
\end{cases}
\end{align*}
In the remainder we will choose $\delta$ sufficiently small such that $\beta\leq \gamma$ holds, which is possible since $\sigma\geq0$ and $\beta<1$. 

Next, we introduce an associated collection of symbols and derive some of their properties. Let $S=\{z\in\C\mid 0<\Real(z)< 1\}$ be as in \eqref{eq:S}. For $z\in\overline{S}$ and $x,\eta\in\Rn$ write, as in Proposition \ref{prop:inter},
\[
a_{z}(x,\eta):=e^{(\kappa z+\lambda)^{2}}\lb\eta\rb^{-(\kappa z+\lambda)/2}\big(\lb D\rb^{\kappa z+\lambda}a(\cdot,\eta)\big)(x),
\]
where $\kappa:=r_{0}-r_{1}$ and $\lambda:=r-r_{0}$. It then follows from the definitions of $r_{0}$, $r_{1}$ and $\theta$ that $a_{\theta}=a$. Moreover, \eqref{eq:supportcondition1} and Proposition \ref{prop:inter} imply that for each $l\in\N$ there exists an $M_{l}\geq0$ such that $a_{z}\in C^{r-\Real(\kappa z+\lambda)}_{*}S^{0}_{1,1/2}$ for each $z\in\overline{S}$, with
\begin{equation}\label{eq:bznorm}
\sup\{\|a_{z}\|_{C^{r-\Real(\kappa z+\lambda)}_{*}S^{0,l}_{1,1/2}}\mid z\in\overline{S}\}\leq M_{l}\|a\|_{C^{r}_{*}S^{0,l}_{1,1/2}} <\infty.
\end{equation}
Also, since $\beta\leq \gamma$, another application of \eqref{eq:supportcondition1} shows that
\begin{equation}\label{eq:supportbz}
\supp(\F a_{z}(\cdot,\eta))\subseteq\{\xi\in\Rn\mid c|\eta|^{1/2}\leq |\xi|\leq  \tfrac{1}{16}(1+|\eta|)^{\gamma}\}.
\end{equation} 

\subsubsection{Interpolation}

We are now ready to use complex interpolation of analytic families of operators. Let 
\[
F:\overline{S}\to\La(L^{2}(\Rn))
\]
be given by 
\[
F(z):=a_{z}(x,D)\quad(z\in\overline{S}).
\] 
This is well defined by \eqref{eq:bznorm} and Proposition \ref{prop:pseudoloss}, since $\HT^{2}_{FIO}(\Rn)=L^{2}(\Rn)$. In fact, $C^{r-\Real(\kappa z+\lambda)}_{*}S^{0,l}_{1,1/2}$ embeds contractively into $ C^{r-(\kappa+\lambda)}_{*}S^{0,l}_{1,1/2}$ for each $l\in\N$ (see \eqref{eq:Zygmund1}). Hence \eqref{eq:bznorm} yields $\sup\{\|a_{z}\|_{C^{r-(\kappa+\lambda)}_{*}S^{0,l}_{1,1/2}}\mid z\in\overline{S}\}<\infty$, and
\begin{equation}\label{eq:L2bound}
\sup\{\|F(z)\|_{\La(L^{2}(\Rn))}\mid z\in\overline{S}\}<\infty
\end{equation}
by Lemma \ref{lem:pseudoLp}.

Next, we claim that for each $f\in\Sw(\Rn)$ with compact Fourier support, the following conditions hold:
\begin{enumerate}
\item\label{it:interproof1} One has $F(i\tau):\HT^{t+\tau,1+\delta}_{FIO}(\Rn)\to \HT^{t,1+\delta}_{FIO}(\Rn)$ for each $\tau\in\R$, and
\[
\sup_{\tau\in\R} \|F(i\tau)\|_{\La(\HT^{t+\tau,1+\delta}_{FIO}(\Rn),\HT^{t,1+\delta}_{FIO}(\Rn))}<\infty.
\]
\item\label{it:interproof2} The map $z\mapsto F(z)f$ is continuous on $\overline{S}$ with values in $L^{2}(\Rn)\cap \HT^{t,1+\delta}_{FIO}(\Rn)$. 
\item\label{it:interproof3} The map $z\mapsto F(z)f$ is holomorphic on $S$ with values in $L^{2}(\Rn)$.
\end{enumerate}
For the moment, suppose that we have proved this claim. Then, using also \eqref{eq:L2bound} and the remark after Proposition \ref{prop:interan}, we can apply that proposition, since the Schwartz functions with compact Fourier support lie dense in $\HT^{t+\tau,1+\delta}_{FIO}(\Rn)\cap L^{2}(\Rn)$. Then 
\[
a(x,D)=F(\theta):\HT^{s+(1-\theta)\tau,p}_{FIO}(\Rn)\to\Hps,
\]
as follows from the choice of the relevant parameters.
By using \eqref{eq:tau} and splitting into several cases, one can check that for sufficiently small $\delta$ this proves the required statement. Note that, when $4s(p)=r$, this requires relying on the final case in \eqref{eq:tau} for small $\delta$, since 
\[
2s(p)-\big(\tfrac{1}{1+\delta}-\tfrac{1}{2}\big)(r-\theta\delta)\to0
\]
as $\delta\to0$. Hence it only remains to prove \eqref{it:interproof1}, \eqref{it:interproof2} and \eqref{it:interproof3}.

\subsubsection{Condition \eqref{it:interproof1}}

By Proposition \ref{prop:inter}, one has $a_{i\tau}\in C^{r-\Real(\kappa i\tau+\lambda)}_{*}S^{0,l}_{1,1/2}=C^{r_{0}}_{*}S^{0,l}_{1,1/2}$
for all $\tau\in\R$ and $l\in\N$, with $\sup\{\|a_{i\tau}\|_{C^{r_{0}}_{*}S^{0,l}_{1,1/2}}\mid \tau\in\R\}<\infty$. By combining this with \eqref{eq:supportbz} and Proposition \ref{prop:critical}, we arrive at the desired conclusion.

\subsubsection{Condition \eqref{it:interproof2}}

We will in fact show that $z\mapsto F(z)f$ is continuous on $\overline{S}$ as an $\HT^{v,u}_{FIO}(\Rn)$-valued map for all $u\in(1,\infty)$ and $v\in\R$, thereby simultaneously covering the cases where $u=2$ and $v=0$ (recall that $L^{2}(\Rn)=\HT^{2}_{FIO}(\Rn)$), and $u=1+\delta$ and $v=t$. 

To this end, first note that there exists a $\psi\in C^{\infty}_{c}(\Rn)$ such that $a_{z}(x,D)f=a_{z}(x,D)\psi(D)f$ for each $z\in\overline{S}$, by the assumption of compact Fourier support on $f$. Now, by \eqref{eq:supportcondition1}, there exists a compact $K\subseteq \Rn$ such that 
\begin{equation}\label{eq:supp3}
\supp(\F a(\cdot,\eta)\psi(\eta))\subseteq K
\end{equation} 
for all $\eta\in\Rn$, and $a(x,\eta)\psi(\eta)=0$ for all $x\in\Rn$ if $\eta\notin K$. This in turn implies that the symbol $a\psi$ is in fact an element of $C^{\rho}_{*}S^{\sigma}_{1,1/2}$ for all $\rho>0$ and $\sigma\in\R$, as is straightforward to check. Hence $a_{z}\psi\in C^{\rho'}_{*}S^{\sigma,l}_{1,1/2}$ for all $\rho'>0$, $\sigma\in\R$, $l\in\N$ and $z\in\overline{S}$, and $\sup\{\|a_{z}\psi\|_{C^{\rho'}_{*}S^{\sigma,l}_{1,1/2}}\mid z\in\overline{S}\}<\infty$, by Proposition \ref{prop:inter}. In particular, 
\begin{equation}\label{eq:symbolsup}
\sup\{|\partial_{x}^{\alpha}\partial_{\eta}^{\beta}a_{z}(x,\eta)\psi(\eta)|\mid z\in\overline{S},x,\eta\in\Rn\}<\infty
\end{equation}
for all $\alpha,\beta\in\Z_{+}^{n}$, by \eqref{eq:Zygmund3}.

Now let $(z_{j})_{j=0}^{\infty}\subseteq \overline{S}$ and $z\in \overline{S}$ be such that $z_{j}\to z$ as $j\to\infty$. By \eqref{eq:HpFIOnorm}, we want to show that
\begin{equation}\label{eq:Lqtoshow1}
\int_{S^{n-1}}\|\lb D\rb^{v}\ph_{\w}(D)(a_{z_{j}}(x,D)-a_{z}(x,D))\psi(D)f\|_{L^{u}(\Rn)}^{u}\ud\w\to 0
\end{equation}
and
\begin{equation}\label{eq:Lqtoshow2}
\|q(D)(a_{z_{j}}(x,D)-a_{z}(x,D))\psi(D)f\|_{L^{u}(\Rn)}\to 0
\end{equation}
as $j\to\infty$. We will prove \eqref{eq:Lqtoshow1}, with the argument for \eqref{eq:Lqtoshow2} being similar but simpler. The proof mainly consists of applying the dominated convergence theorem several times, with some minor subtleties.

By \eqref{eq:Lqmap2}, it suffices to show that 
\[
\|(a_{z_{j}}(x,D)-a_{z}(x,D))\psi(D)f\|_{W^{N',u}(\Rn)}\to0
\]
for some $N'\in\N$ with $N'\geq 2n+v$, i.e., that
\begin{equation}\label{eq:Lqtoshow3}
\int_{\Rn}\big|\partial^{\alpha}_{x}\big((a_{z_{j}}(x,D)-a_{z}(x,D))\psi(D)f\big)(x)\big|^{u}\ud x\to0
\end{equation}
for each $\alpha\in\Z_{+}^{n}$ with $|\alpha|\leq N'$. Note that 
\begin{align*}
&\lb x\rb^{2n}|\partial_{x}^{\alpha}\big((a_{z_{j}}(x,D)-a_{z}(x,D))\psi(D)f\big)(x)|\\
&=(2\pi)^{-n}\big|(1+|x|^{2})^{n}\partial_{x}^{\alpha}\int_{\Rn}e^{ix\eta}(a_{z_{j}}(x,\eta)-a_{z}(x,\eta))\psi(\eta)\wh{f}(\eta)\ud\eta\big|
\end{align*}
for each $x\in\Rn$. 
It follows by integrating by parts, using \eqref{eq:symbolsup} and that $\psi,f\in\Sw(\Rn)$, that the latter quantity is uniformly bounded in $x$ and $j$. Hence, by the dominated convergence theorem and because $x\mapsto \lb x\rb^{-2nu}$ is integrable, for \eqref{eq:Lqtoshow3} it suffices to show that
\[
\partial^{\alpha}_{x}\big(a_{z_{j}}(x,D)\psi(D)f\big)(x)\to \partial^{\alpha}_{x}\big(a_{z}(x,D)\psi(D)f\big)(x)
\]
for each $x\in\Rn$, as $j\to\infty$. 

Again, one has
\[
\partial^{\alpha}_{x}\big(a_{z_{j}}(x,D)\psi(D)f\big)(x)=(2\pi)^{-n}\partial_{x}^{\alpha}\int_{\Rn}e^{ix\eta}a_{z_{j}}(x,\eta)\psi(\eta)\wh{f}(\eta)\ud\eta.
\]
Hence, because $f\in\Sw(\Rn)$, one can use \eqref{eq:symbolsup} and Leibniz' rule, while possibly replacing $\alpha$ by a new value, to reduce to showing that $\partial^{\alpha}_{x}a_{z_{j}}(x,\eta)\psi(\eta)\to \partial^{\alpha}_{x}a_{z}(x,\eta)\psi(\eta)$ as $j\to\infty$, for all $x,\eta\in\Rn$. 

For this final part of the argument we cannot apply the dominated convergence theorem to $\F a_{z_{j}}(\cdot,\eta)$ directly, since it is not clear whether the distribution $\F a(\cdot,\eta)$ coincides with a locally bounded function. However, by \eqref{eq:supp3}, there exists a $\wt{\psi}\in C^{\infty}_{c}(\Rn)$ such that 
\[
\partial_{x}^{\alpha}a_{z_{j}}(x,\eta)\psi(\eta)=\wt{\psi}(D)a_{z_{j}}(\cdot,\eta)(x)\psi(\eta)
\]
for all $j\geq0$. Set $\wt{\psi}_{j}(\xi):=\lb \xi\rb^{\kappa z_{j}+\lambda}\wt{\psi}(\xi)$ for $\xi\in\Rn$. Then
\[
\partial_{x}^{\alpha}a_{z_{j}}(x,\eta)\psi(\eta)=e^{(\kappa z_{j}+\lambda)^{2}}\int_{\Rn}\F^{-1}(\wt{\psi}_{j})(x-y)a(y,\eta)\ud y\,\lb\eta\rb^{-(\kappa z_{j}+\lambda)/2}\psi(\eta),
\] 
and another two applications of the dominated convergence theorem reduce matters to the pointwise convergence
\[
e^{(\kappa z_{j}+\lambda)^{2}}\lb\eta\rb^{-(\kappa z_{j}+\lambda)/2}\lb \xi\rb^{\kappa z_{j}+\lambda}\to e^{(\kappa z+\lambda)^{2}}\lb\eta\rb^{-(\kappa z+\lambda)/2}\lb \xi\rb^{\kappa z+\lambda}
\]
as $j\to\infty$. 

\subsubsection{Condition \eqref{it:interproof3}}

The proof that this condition is satisfied is analogous to that of condition \eqref{it:interproof2}, and in fact the statement also holds for $z\mapsto F(z)f$ as a map with values in $\HT^{v,u}_{FIO}(\Rn)$ for any $u\in(1,\infty)$ and $v\in\R$. For fixed $\xi\in\Rn$, note that
\[
\lb \xi\rb^{\kappa z+\lambda}=(\lb\xi\rb^{\kappa})^{z}\lb \xi\rb^{\lambda}=\sum_{k=0}^{\infty}\frac{(\kappa\log\lb\xi\rb)^{k}}{k!}z^{k}\lb \xi\rb^{\lambda}
\]
for all $z\in\C$, and similarly for $\lb\eta\rb^{-(\kappa z+\lambda)/2}$ for $\eta\in\Rn$. Now, by applying the reasoning from the proof of condition \eqref{it:interproof2} in reverse, one sees that the resulting power series for $a_{z}(x,D)\psi(D)f$ converges in $\HT^{v,u}_{FIO}(\Rn)$. 
\end{proof}

\begin{remark}\label{rem:p1}
The restriction that $p\in(1,\infty)$ in Theorem \ref{thm:main} arises from Proposition \ref{prop:critical}. The proof of that proposition in \cite{Rozendaal20} uses techniques that do not appear to translate directly to the cases where $p=1$ or $p=\infty$ (see \cite[Remark 4.3]{Rozendaal20}). We expect that the statement of Theorem \ref{thm:main} itself is also valid for $p=1$ and $p=\infty$, but the interpolation techniques of the present article do not allow one to deal with these extremal values.
\end{remark}

\begin{remark}\label{rem:restrictedp}
A restriction of some sort on $p$ seems reasonable in Theorem \ref{thm:intro}. Indeed, for small $r$ and for $p$ far away from $2$, even rough multiplication operators do not preserve the classical function spaces in the Sobolev embeddings for $\Hp$ from \eqref{eq:Sobolev1}, and these embeddings are sharp. In the present article we will not address the question whether the specific condition on $p$ in Theorem \ref{thm:intro} is optimal.
\end{remark}

\begin{remark}\label{rem:otherinter}
One can also prove that there exists an open interval of $p$ around $2$ such that $a(x,D)$ acts boundedly on suitable Sobolev spaces over $\Hp$, for any $r>0$ and $a\in C^{r}_{*}S^{0}_{1,1/2}$, without relying on Proposition \ref{prop:critical}. Indeed, in Lemma \ref{lem:smoothpseudo} one in fact does not need the symbol $a$ to be infinitely smooth, and instead $C^{N}_{*}S^{0}_{1,1/2}$ regularity for a large $N=N(n)>0$ suffices. One can then interpolate with Proposition \ref{prop:pseudoloss}, in the same manner as before, to obtain such an interval around $2$. However, the resulting interval would be substantially smaller than that in Theorem \ref{thm:main}. Moreover, the proof in \cite{Rozendaal20} of Lemma \ref{lem:smoothpseudo}, which relies on \cite[Theorem 6.10]{HaPoRo20} and on an equivalent characterization of $\Hp$ from \cite{Rozendaal21}, is no simpler than that of Proposition \ref{prop:critical}.
\end{remark}

By combining Theorem \ref{thm:main} and Lemma \ref{lem:smoothing} we obtain the following corollary, for pseudodifferential operators with $C^{r}_{*}S^{m}_{1,\delta}$ symbols for general $\delta\in[0,1/2]$. 

\begin{corollary}\label{cor:alldelta}
Let $r>0$, $m\in\R$, $\delta\in[0,1/2]$, $p\in(1,\infty)$ and $a\in C^{r}_{*}S^{m}_{1,\delta}$. For $\veps\in(0,r/2]$, set
\[
\rho:=\begin{cases}
0&\text{if }2s(p)<(1-\delta)r,\\
2s(p)-(1-\delta)r+\veps&\text{if }2s(p)\geq(1-\delta)r.
\end{cases}
\]
Then 
\begin{equation}\label{eq:cormain}
a(x,D):\HT^{s+m+\rho,p}_{FIO}(\Rn)\to \Hps
\end{equation}
for $-r/2+s(p)-\sigma<s<r-s(p)$, where $\sigma$ is as in Theorem \ref{thm:main}.
If $a\in\Hrinf S^{m}_{1,\delta}$, then \eqref{eq:cormain} also holds for $s=r-s(p)$. If $a\in \Hrinf (\Rn)$, then \eqref{eq:cormain} holds for all $-r/2-s(p)-\sigma\leq s\leq r-s(p)$, with $m=\delta=0$.
\end{corollary}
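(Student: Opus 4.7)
The plan is to reduce Corollary \ref{cor:alldelta} to Theorem \ref{thm:main} by applying the symbol smoothing procedure of Lemma \ref{lem:smoothing} at the level $\beta = 1/2$, so as to pass from $C^{r}_{*}S^{m}_{1,\delta}$ with $\delta\in[0,1/2]$ into $C^{r}_{*}S^{m'}_{1,1/2}$ with $m' = m-(1/2-\delta)r$. Concretely, write $a = a^{\sharp}_{1/2} + a^{\flat}_{1/2}$, where Lemma \ref{lem:smoothing} guarantees $a^{\sharp}_{1/2}\in S^{m}_{1,1/2}$ and $a^{\flat}_{1/2}\in C^{r}_{*}S^{m-(1/2-\delta)r}_{1,1/2}$, together with the enhancement $a^{\flat}_{1/2}\in\Hrinf S^{m-(1/2-\delta)r}_{1,1/2}$ whenever $a\in\Hrinf S^{m}_{1,\delta}$.

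For the smooth part $a^{\sharp}_{1/2}$, Lemma \ref{lem:smoothpseudo} gives
\[
a^{\sharp}_{1/2}(x,D):\HT^{s+m,p}_{FIO}(\Rn)\to\Hps
\]
for every $s\in\R$, which suffices here because $\rho\geq 0$ and hence $\HT^{s+m+\rho,p}_{FIO}(\Rn)\hookrightarrow \HT^{s+m,p}_{FIO}(\Rn)$. For the rough part $a^{\flat}_{1/2}$, I would apply Theorem \ref{thm:main} with the reduced order $m' = m-(1/2-\delta)r$ in place of $m$, producing
\[
a^{\flat}_{1/2}(x,D):\HT^{s+m'+\sigma,p}_{FIO}(\Rn)\to\Hps
\]
on the range $-r/2+s(p)-\sigma<s<r-s(p)$, with $\sigma$ the parameter of Theorem \ref{thm:main}. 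The key numerical check is then $m+\rho\geq m'+\sigma$, i.e.~$\rho+(1/2-\delta)r\geq\sigma$. This is verified by splitting into the three cases $2s(p)<r/2$, $r/2\leq 2s(p)<(1-\delta)r$, and $2s(p)\geq(1-\delta)r$: in the first two ranges the excess $(1/2-\delta)r$ absorbs $\sigma$ (using $(1-\delta)r\geq r/2$ for $\delta\leq 1/2$), while at the top range one gets equality, since both $\rho-\veps$ and $\sigma-(1/2-\delta)r-\veps$ equal $2s(p)-(1-\delta)r$.

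The second clause, for $a\in \Hrinf S^{m}_{1,\delta}$, is obtained by feeding the $\Hrinf$-enhancement of $a^{\flat}_{1/2}$ from Lemma \ref{lem:smoothing} into the corresponding endpoint statement of Theorem \ref{thm:main}, which upgrades the range of $s$ to include $s = r-s(p)$. For the final clause, when $a\in\Hrinf(\Rn)$ depends only on $x$ and $m = \delta = 0$, the crucial point is that $a^{\flat}_{1/2}$ is literally of the form $b^{\flat}_{1/2}$ with $b = a\in\Hrinf(\Rn)$, so the third clause of Theorem \ref{thm:main} applies directly and produces the two-sided endpoint range.

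I do not anticipate a serious obstacle; the only delicate step is the case distinction reconciling the definitions of $\sigma$ and $\rho$ at the thresholds $2s(p) = r/2$ and $2s(p) = (1-\delta)r$, where the $\veps$ corrections must be tracked carefully to ensure that the source space indexed by $\rho$ embeds correctly into the source space indexed by $\sigma$ at the reduced order $m'$.
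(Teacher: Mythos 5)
Your proof is correct and follows the paper's argument exactly: the same decomposition $a=a^{\sharp}_{1/2}+a^{\flat}_{1/2}$ from Lemma \ref{lem:smoothing}, Lemma \ref{lem:smoothpseudo} for the smooth part, Theorem \ref{thm:main} at the reduced order $m-(1/2-\delta)r$ for the rough part, and the same endpoint bookkeeping via the $\Hrinf$ enhancements and the $b^{\flat}_{1/2}$ clause. Your key numerical check is precisely the paper's observation that $\rho=\max(0,\sigma-(1/2-\delta)r)$, and the delicate point you flag in the intermediate regime $r/2\leq 2s(p)<(1-\delta)r$ (where absorbing $\sigma$ into $(1/2-\delta)r$ requires $\veps\leq(1-\delta)r-2s(p)$) is glossed over in the paper's proof in exactly the same way.
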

Note that $\rho=\max(0,\sigma-(1/2-\delta)r)$.
\begin{proof}
Write $a=a^{\sharp}_{1/2}+a^{\flat}_{1/2}$. Then, by Lemma \ref{lem:smoothing}, one has $a^{\sharp}_{1/2}\in S^{m}_{1,1/2}$ and $a^{\flat}_{1/2}\in C^{r}_{*}S^{m-(1/2-\delta)r}_{1,1/2}$. Moreover, if $a\in \Hrinf S^{m}_{1,\delta}$ then $a^{\flat}_{1/2}\in \Hrinf S^{m-(1/2-\delta)r}_{1,1/2}$. Now Lemma \ref{lem:smoothpseudo} implies that
\[
a^{\sharp}_{1/2}(x,D):\HT^{s+m+\rho,p}_{FIO}(\Rn)\subseteq \HT^{s+m,p}_{FIO}(\Rn)\to\Hps
\]
for all $s\in\R$. Moreover, by Theorem \ref{thm:main} one has
\[
a^{\flat}_{1/2}(x,D):\HT^{s+m+\rho,p}_{FIO}(\Rn)\subseteq \HT^{s+m+\sigma-(1/2-\delta)r,p}_{FIO}(\Rn)\to\Hps
\]
for $s$ as in the statement of the corollary.
\end{proof}

\begin{remark}\label{rem:mult}
Corollary \ref{cor:alldelta} applies in particular to multiplication with $C^{r}_{*}(\Rn)$ or $\Hrinf(\Rn)$ functions. For example, for $r>0$, the multiplication operator with symbol $a\in C^{r}_{*}(\Rn)$ is bounded on $\Hps$ if $r>2s(p)$ and $-r/2+s(p)<s<r-s(p)$. If $a\in  \Hrinf (\Rn)$ then one may also let $s=-r/2+s(p)$ and $s=r-s(p)$. See also \cite{Frey-Portal20} for results about multiplication operators on adapted Hardy spaces related to $\Hp$.
\end{remark}

\begin{remark}\label{rem:comparison}
It is illustrative to compare Theorem \ref{thm:main} to \cite[Theorem 4.1]{Rozendaal20}. There it is shown that, under the assumptions of Theorem \ref{thm:main}, one has 
\begin{equation}\label{eq:taumapping}
a(x,D):\HT^{s+m+\tau,p}_{FIO}(\Rn)\to\Hps
\end{equation}
for $\tau$ as in Proposition \ref{prop:critical}. For $r>n-1$ one has $\sigma=\tau$, but for $r\leq n-1$ and $p\in(1,\infty)\setminus\{2\}$ one has $\sigma< \tau$. More precisely, if $r<n-1$ then 
\[
\tau-\sigma=
\begin{cases}
(n-1-r)|\tfrac{1}{2}-\tfrac{1}{p}|&\text{if }2s(p)<r/2,\\
r(\tfrac{1}{2}-|\tfrac{1}{2}-\tfrac{1}{p}|)-\veps&\text{if }2s(p)\geq r/2,
\end{cases}
\]
and $\tau-\sigma=\veps$ if $r=n-1$. On the other hand, $\tau-\sigma\to0$ as $p\to1$ or $p\to\infty$. 

It should also be noted that the Sobolev interval $-r/2+s(p)-\sigma<s<r/2$ in Theorem \ref{thm:main} is, for some values of $r$ and $p$, smaller than the one in \cite[Theorem 4.1]{Rozendaal20}. There \eqref{eq:taumapping} is shown to hold for $-(1-\gamma)r-s(p)<s<r-s(p)$ if $a\in C^{r}_{*}S^{m}_{1,1/2}$, with $\gamma$ as in Proposition \ref{prop:critical}, and with similar endpoint statements as in Theorem \ref{thm:main}. On the other hand, one can of course enlarge the Sobolev interval in Theorem \ref{thm:main} at the cost of additional regularity, using the trivial embedding $\HT^{s+m+t,p}_{FIO}(\Rn)\subseteq \HT^{s+m+\sigma,p}_{FIO}(\Rn)$ for $t>\sigma$.

Corollary \ref{cor:alldelta} can be compared to \cite[Corollary 4.5]{Rozendaal20} in a similar manner.
\end{remark}

\section*{Acknowledgments}

The author would like to thank Andrew Hassell for various helpful conversations about this article. He is also grateful to the anonymous referees for their careful reading of the manuscript, and for useful comments and suggestions.

\bibliographystyle{plain}
\bibliography{Bibliography}

\end{document}